\newtheorem{theorem}{Theorem}[section] 
\newtheorem{lemma}[theorem]{Lemma}
\newtheorem{corollary}[theorem]{Corollary}
\newtheorem{proposition}[theorem]{Proposition}
\theoremstyle{definition}
\newtheorem{definition}{Definition}
\newtheorem*{maintheorem}{Main Theorem }
\theoremstyle{remark}
\newtheorem{remark}{Remark}
\begin{document}

\title[Hecke algebras of $p$-adic $\mathrm{GL}_n(\mathcal{D})$]{Smooth Representations and Hecke algebras of $p$-adic $\mathrm{GL}_n(\mathcal{D})$}
\author[Mondal \& Pattanayak]{Amiya Kumar Mondal and Basudev Pattanayak}

\address{Department of Mathematics, Indian Institute of Science Education and Research Berhampur, Odisha-760010 India.}
\email{amiya@iiserbpr.ac.in}

\address{Department of Mathematics, Indian Institute of Technology Bombay, Mumbai - 400076, India.}
\email{pbasudev93@gmail.com}

\subjclass{Primary 22E50; Secondary 20C08, 20G25}

\date{}

\begin{abstract}
The main question we are going to address in this paper is: How much does the representation theory of the $p$-adic group $\mathrm{GL}_n(\mathcal{D})$ depend on the $p$-adic division algebra $\mathcal{D}$? Let $\mathcal{D}$ be a central division algebra defined over some locally compact non-archimedean local field. Using Bushnell-Kutzko theory of types and S\'echerre-Stevens decomposition of spherical Hecke algebras associated to types, we obtain that the cuspidal blocks in the Bernstein decomposition of the category $\mathcal{R} \left( \mathrm{GL}_n(\mathcal{D}) \right)$ of smooth complex representations of $\mathrm{GL}_n(\mathcal{D})$ do not depend on the $p$-adic division algebra $\mathcal{D}$. In particular, when $n=1$ or $2$, the category $\mathcal{R} \left( \mathrm{GL}_n(\mathcal{D}) \right)$ does not depend on the $p$-adic division algebra $\mathcal{D}$. 
\end{abstract}

\maketitle

\section{Introduction}\label{intro}
Let $F$ be a locally compact non-archimedean local field with the finite residue cardinality $q_F$, and let $\mathcal{D}$ be a finite-dimensional central division algebra over the center $F$. For $N \geq 1$, let $\mathrm{GL}_N$ denote the $N$-dimensional general linear algebraic group, and $\mathrm{GL}_N(\mathcal{D})$ be the group of $F$-rational points of an inner form of $\mathrm{GL}_{N \cdot d}$, where $d^2$ denotes the dimension of $\mathcal{D}$ over $F$. One can think of $\mathrm{GL}_N(\mathcal{D})$ as the group of automorphisms of a right $\mathcal{D}$-vector space $\mathscr{V}$ of dimension $N$. Let $C_c^{\infty}(\mathrm{GL}_N(\mathcal{D}),\mathbb{C})$ be the space of locally constant complex valued functions $\phi: \mathrm{GL}_N(\mathcal{D}) \rightarrow \mathbb{C}$ with compact support. Let us choose a Haar measure $dg$ on $\mathrm{GL}_N(\mathcal{D})$ and that will determine a convolution product $*$ on the space $C_c^{\infty}(\mathrm{GL}_N(\mathcal{D}),\mathbb{C})$. The product gives $C_c^{\infty}(\mathrm{GL}_N(\mathcal{D}),\mathbb{C})$ the structure of an associative $\mathbb{C}$-algebra, known as the Hecke algebra $\mathcal{H}\left(\mathrm{GL}_N(\mathcal{D})\right)$ of $\mathrm{GL}_N(\mathcal{D})$ with respect to $dg$.

The study of the Hecke algebra and its representations is immensely connected to the theory of smooth representations of the $p$-adic groups. In particular, if $(\pi,V)$ is a smooth complex representation of $\mathrm{GL}_N(\mathcal{D})$, then $V$ becomes a non-degenerate $\mathcal{H}\left(\mathrm{GL}_N(\mathcal{D})\right)$-module and conversely, any non-degenerate $\mathcal{H}\left(\mathrm{GL}_N(\mathcal{D})\right)$-module is associated to a unique smooth representation of $\mathrm{GL}_N(\mathcal{D})$. Most fundamentally, the category $\mathcal{R}(\mathrm{GL}_N(\mathcal{D}))$ of all smooth representations of $\mathrm{GL}_N(\mathcal{D})$ on complex vector spaces is  equivalent to the category $\mathcal{H}\left(\mathrm{GL}_N(\mathcal{D})\right)$-Mods of non-degenerate $\mathcal{H}\left(\mathrm{GL}_N(\mathcal{D})\right)$-modules. 

The smooth representations of $\mathrm{GL}_N(\mathcal{D})$ are classified in \cite{DKV,Tad90}, and the Hecke algebras $\mathcal{H}\left(\mathrm{GL}_N(\mathcal{D})\right)$ have been studied in \cite{Sec09, SS08, SS12}. In this paper, our main goal is to study the Hecke algebras $\mathcal{H}\left(\mathrm{GL}_N(\mathcal{D})\right)$ explicitly for $N=1,2$. Using the one-to-one correspondences between smooth representations of $\mathrm{GL}_N(\mathcal{D})$ and non-degenerate modules of $\mathcal{H}\left(\mathrm{GL}_N(\mathcal{D})\right)$, we show that the cuspidal blocks in the Bernstein decomposition of the category $\mathcal{R} \left( \mathrm{GL}_N(\mathcal{D}) \right)$ of smooth complex representations of $\mathrm{GL}_N(\mathcal{D})$ do not depend on the $p$-adic division algebra $\mathcal{D}$ (see Theorem \ref{theorem: cusp}). In particular, we have the following interesting result of this paper: (see Corollary \ref{main:theorem:n=1} and Corollary \ref{main:corollary})
\begin{maintheorem}
 The category $\mathcal{R}\left(\mathrm{GL}_N(\mathcal{D})\right)$ of smooth complex representations of $\mathrm{GL}_N(\mathcal{D})$ does not depend on the $p$-adic division algebra $\mathcal{D}$ for $N=1,2$.   
\end{maintheorem}
These results over $p$-adic division algebra are motivated by earlier works, namely \cite{Kar16, CK17} in the context of the local and global fields. To prove this result, we first use Bernstein decomposition of the algebra $\mathcal{H}\left(\mathrm{GL}_N(\mathcal{D})\right)$ as a direct sum of two-sided ideals $\mathcal{H}\left(\mathrm{GL}_N(\mathcal{D})\right)^\mathfrak{s}$  attached to the inertial equivalence class $\mathfrak{s}$ running over the Bernstein spectrum $\mathcal{B} \left(\mathrm{GL}_N(\mathcal{D}) \right)$, and then Bushnell-Kutzko's theory of types (see \cite{BK98}) implies each Bernstein component $\mathcal{H}\left(\mathrm{GL}_N(\mathcal{D})\right)^\mathfrak{s}$ is Morita equivalent to some spherical Hecke algebra associated to a type in $\mathrm{GL}_N(\mathcal{D})$. Later, we use S\'echerre-Stevens decomposition (see \cite{SS12}) of spherical Hecke algebra as a direct sum of some finitely generated complex polynomial algebras. For $N=1$ and $2$, it implies that up to Morita equivalence, the Hecke algebra $\mathcal{H}\left(\mathrm{GL}_N(\mathcal{D})\right)$ does not depend on the central division algebra $\mathcal{D}$ defined over any locally compact non-archimedean local field.

\subsection*{Acknowledgments}
The authors would like to thank Dipendra Prasad for his valuable suggestions and for pointing out a mistake after the first draft. The authors are thankful to Sandeep Repaka as well as Alan Roche for calling their attention to these problems.

\section{Preliminaries}\label{prelim}
We assume that $F$ is a locally compact non-archimedean local field, and let $\mathcal{D}$ be a finite-dimensional central division algebra over its center $F$ with dimension $\mathrm{dim}_{F}(\mathcal{D})=d^2$. The ring of integers of $F$ and $\mathcal{D}$ will be denoted by $\mathcal{O}_F$ and $\mathcal{O}_\mathcal{D}$ respectively, with their corresponding unique maximal ideals by $\mathcal{P}_F$ and $\mathcal{P}_\mathcal{D}$. Let $q_F$ be the cardinality of the  residue field $\mathcal{O}_F/\mathcal{P}_F$ and so the cardinality of $\mathcal{O}_\mathcal{D} / \mathcal{P}_\mathcal{D}$ is $q_F^d$.
Let $\varpi_F$ be the uniformizer in $F$, and $\varpi$ be the uniformizer in $\mathcal{D}$ so that $\varpi^d=\varpi_F$. The absolute valuation $|\cdot|_F$ on $F$ is normalized such that $|\varpi_F|_F = q_F^{-1}$. Suppose $\mathrm{Nrd}_{\mathcal{D}/F}$  is the reduced norm map from $\mathcal{D}$ to $F$. Then, the absolute valuation $|\cdot|$ on $\mathcal{D}^\times$ defined by $|\alpha|=|\mathrm{Nrd}_{\mathcal{D}/F}(\alpha)|_F^d$ for $\alpha \in \mathcal{D}^\times$ is normalized so that $|\varpi| = q_F^{-d}$. Let $M_N(\mathcal{D})$ be the $F$-algebra of all $N \times N$ matrices with coefficients in $\mathcal{D}$ possessing the natural topology structure. Let $G=\mathrm{GL}_N(\mathcal{D})=M_N(\mathcal{D})^*$ be the unit group of $M_N(\mathcal{D})$. Then, the topological group $G=\mathrm{GL}_N(\mathcal{D})$ is the group of $F$-rational points of an inner form of the linear algebraic group $\mathrm{GL}_{N^\prime}$ for $N^\prime =d \cdot N$ i.e., $G$ is an inner form of $\mathrm{GL}_{N^\prime}(F)$. The classification of irreducible smooth (complex) representations of $G$ is well-known (cf. \cite{Tad90}). We denote $\mathcal{R}(G)$ for the category of all smooth representations of $G$ on complex vector spaces. 
\subsection{Hecke algebra}
Let $C_c^{\infty}(\mathrm{GL}_N(\mathcal{D}),\mathbb{C})$ denote the space of locally constant complex valued functions $\varphi: \mathrm{GL}_N(\mathcal{D}) \rightarrow \mathbb{C}$ with compact support. Let us choose a Haar measure $dg$ on $\mathrm{GL}_N(\mathcal{D})$ and that determines a convolution product $*$ on the space $C_c^{\infty}(\mathrm{GL}_N(\mathcal{D}),\mathbb{C})$ as: given functions $\varphi_1, \varphi_2 \in C_c^{\infty}(\mathrm{GL}_N(\mathcal{D}),\mathbb{C})$, one can define a function $\varphi_1 * \varphi_2:\mathrm{GL}_N(\mathcal{D}) \rightarrow \mathbb{C}$ by 
\begin{equation}\label{conv}
      \left(\varphi_1 \ast \varphi_2 \right) ( h ) = \int\limits_{\mathrm{GL}_N(\mathcal{D})} \varphi_1(hg^{-1})\varphi_2(g)~dg \quad \text{ for } h \in \mathrm{GL}_N(\mathcal{D}).
\end{equation}
This convolution product gives $\left(C_c^{\infty}(\mathrm{GL}_N(\mathcal{D}),\mathbb{C}), * \right)$ a structure of an associated $\mathbb{C}$-algebra (in general with no unit) with idempotent and is known as the Hecke algebra $\mathcal{H}\left(\mathrm{GL}_N(\mathcal{D})\right)$ of $\mathrm{GL}_N(\mathcal{D})$ with respect to $dg$.

Let $(\pi,V)$ be a smooth complex representation of $\mathrm{GL}_N(\mathcal{D})$. For each $\varphi \in \mathcal{H}\left(\mathrm{GL}_N(\mathcal{D})\right)$, we have the operator $\pi(\varphi): V \rightarrow V$ given by 
\begin{equation}\label{operator}
      \pi(\varphi) v = \int\limits_{\mathrm{GL}_N(\mathcal{D})} \varphi(g)  \pi(g)v~dg \quad \text{ for } v \in V,
\end{equation}
and it satisfies $\pi(\varphi_1) \pi(\varphi_2)= \pi(\varphi_1 * \varphi_2)$ for $\varphi_1, \varphi_2 \in C_c^{\infty}(\mathrm{GL}_N(\mathcal{D}),\mathbb{C})$. Therefore, $V$ becomes an $\mathcal{H}\left(\mathrm{GL}_N(\mathcal{D})\right)$-module and the smooth property of the representation $\pi$ determines $V$ as a non-degenerate $\mathcal{H}\left(\mathrm{GL}_N(\mathcal{D})\right)$-module that is, for each $v \in V$,  there is an element $\varphi \in \mathcal{H}\left(\mathrm{GL}_N(\mathcal{D})\right)$ such that $\pi(\varphi)v = v$. Alternatively, $V$ is non-degenerate means  $\mathcal{H}\left(\mathrm{GL}_N(\mathcal{D})\right) \cdot V=V$. Conversely any non-degenerate $\mathcal{H}\left(\mathrm{GL}_N(\mathcal{D})\right)$-module is associated to a unique smooth representation of $\mathrm{GL}_N(\mathcal{D})$. In general, the category $\mathcal{R}(\mathrm{GL}_N(\mathcal{D}))$ of all smooth representations of $\mathrm{GL}_N(\mathcal{D})$ on complex vector spaces is  equivalent to the category $\mathcal{H}\left(\mathrm{GL}_N(\mathcal{D})\right)$-Mods of non-degenerate $\mathcal{H}\left(\mathrm{GL}_N(\mathcal{D})\right)$-modules and we write it as
\begin{equation}\label{Eq:categorical}
 \mathcal{R}(\mathrm{GL}_N(\mathcal{D})) \simeq  \mathcal{H}\left(\mathrm{GL}_N(\mathcal{D})\right) \text{-Mods}.
\end{equation}

\subsection{Affine Hecke algebra} We recall the definition of the affine Hecke algebra $\mathcal{H}(r,\mathfrak{z})$, for  positive integer $r$ and for $\mathfrak z \in \mathbb{C}^\times$. The affine algebra $\mathcal{H}(r,\mathfrak{z})$ is an associative $\mathbb{C}$-algebra with identity, and this algebra is generated by the elements $s_i$ for $1 \leq i \leq r-1$ and a pair of elements $t,t^{-1}$ satisfying the following relations: (cf. \cite[Definition 5.4.6]{BK93})
\begin{itemize}
    \item[(R0)] $t \cdot t^{-1}= 1 = t^{-1} \cdot t$ 
    \vspace{1.5 mm}
    \item[(R1)] $(s_i + 1)\cdot (s_i - \mathfrak z)=0$  for   $1 \leq i \leq r-1;$
    \vspace{1.5 mm}
    \item[(R2)] $t^2 \cdot s_1=s_{n-1} \cdot t^2 ;$
    \vspace{1.5 mm}
    \item[(R3)] $t\cdot s_i = s_{i-1} \cdot t$   for  $2 \leq i \leq r-1;$
    \vspace{1.5 mm}
    \item[(R4)] $s_i \cdot s_{i+1} \cdot s_i = s_{i+1} \cdot s_i \cdot s_{i+1}$  for  $1 \leq i \leq r-2;$
    \vspace{1.5 mm}
    \item[(R5)] $s_i \cdot s_j = s_{j} \cdot s_i$  for   $1 \leq i,j \leq r-1$ with $|i-j| \geq 2$.
\end{itemize}
Clearly, when $r=1$, the relations (R1), (R2), (R3), (R4), and (R5) are vacuously satisfied. Therefore, for any parameter $\mathfrak z \in \mathbb{C}^\times$ the affine Hecke algebra
\begin{equation}\label{n:equal:1}
   \mathcal{H}(1, \mathfrak z) \cong \mathbb{C}[t,t^{-1}].
\end{equation}
When $r = 2$, the above relations (R3), (R4), and (R5) are vacuous. If the parameter $\mathfrak z = 1$, the affine Hecke algebra $\mathcal{H}(2,1)$ is the associated unital $\mathbb{C}$-algebra generated by $s,t,t^{-1}$ subject to the relations:
    \begin{itemize}
    \item[(i)] $t \cdot t^{-1}= 1 = t^{-1} \cdot t$,
    \vspace{1.5 mm}
    \item[(ii)] $(s^2 - 1)=0$ ,
    \vspace{1.5 mm}
    \item[(iii)] $t^2 s-s t^2=0 .$
\end{itemize}
We denote this associative $\mathbb{C}$-algebra $\mathcal{H}(2,1)$ by $\widetilde{\mathbb{C}} [s,t,t^{-1}] / \left\langle   s^2-1, t^2 s-s t^2 \right\rangle.$ Note that this algebra is not commutative in general. We now provide the following result on the affine Hecke algebra $\mathcal{H}(2,\mathfrak z)$, which plays a crucial role in the proof of our main result. 
\begin{lemma}\label{main:lemma}
 If $\mathfrak z + 1 \neq 0$, there is an algebra isomorphism 
   \begin{equation}
      \mathcal{H}(2,\mathfrak z)  \cong \widetilde{\mathbb{C}} [s,t,t^{-1}] / \left\langle   s^2-1, t^2 s-s t^2 \right\rangle .
   \end{equation}
\end{lemma}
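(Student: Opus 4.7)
The plan is to exhibit mutually inverse $\mathbb{C}$-algebra homomorphisms between the two presentations by finding the right affine change of variable for the quadratic generator $s_1$. The key observation is that the relation (R1) forces $s_1$ to satisfy the quadratic $s_1^2=(\mathfrak{z}-1)s_1+\mathfrak{z}$, whose two roots $-1$ and $\mathfrak{z}$ are distinct precisely because $\mathfrak{z}+1\neq 0$; under this hypothesis one can affinely rescale to a generator whose minimal polynomial is $X^2-1$. Concretely I would look for constants $A,B\in\mathbb{C}$ such that the element $As+B$ in the target algebra satisfies $(As+B)^2=(\mathfrak{z}-1)(As+B)+\mathfrak{z}$; a short calculation using $s^2=1$ gives $B=(\mathfrak{z}-1)/2$ and $A^2=(\mathfrak{z}+1)^2/4$, so one may take $A=(\mathfrak{z}+1)/2$. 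Similarly, to invert, I would look for $\alpha,\beta$ with $(\alpha s_1+\beta)^2=1$ in $\mathcal{H}(2,\mathfrak{z})$; the same computation gives $\alpha=2/(\mathfrak{z}+1)$ and $\beta=(1-\mathfrak{z})/(\mathfrak{z}+1)$, the appearance of $\mathfrak{z}+1$ in the denominators being exactly the point where the hypothesis is used.

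With these constants in hand, define
\[
\Phi\colon \mathcal{H}(2,\mathfrak{z})\longrightarrow \widetilde{\mathbb{C}}[s,t,t^{-1}]/\langle s^2-1,t^2s-st^2\rangle,\qquad s_1\mapsto \tfrac{\mathfrak{z}+1}{2}s+\tfrac{\mathfrak{z}-1}{2},\ t\mapsto t,\ t^{-1}\mapsto t^{-1},
\]
and the reverse map
\[
\Psi\colon \widetilde{\mathbb{C}}[s,t,t^{-1}]/\langle s^2-1,t^2s-st^2\rangle\longrightarrow \mathcal{H}(2,\mathfrak{z}),\qquad s\mapsto \tfrac{2}{\mathfrak{z}+1}s_1+\tfrac{1-\mathfrak{z}}{\mathfrak{z}+1},\ t\mapsto t,\ t^{-1}\mapsto t^{-1}.
\]
To see $\Phi$ is well defined, check the three non-trivial relations: (R0) is immediate since $t,t^{-1}$ are sent to themselves; (R1) is the defining property of the constants $A,B$ above; and (R2), which for $r=2$ reads $t^2 s_1=s_1 t^2$, follows from the target relation $t^2 s=st^2$ together with the fact that $\Phi(s_1)$ is a $\mathbb{C}$-linear combination of $s$ and the identity. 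Well-definedness of $\Psi$ is checked analogously, using the $(\alpha,\beta)$ computation for $s^2=1$ and the centrality of $t^2$ in the source with respect to any polynomial expression in $s_1$.

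It then remains to show $\Phi\circ\Psi=\mathrm{id}$ and $\Psi\circ\Phi=\mathrm{id}$. Since both compositions fix $t$ and $t^{\pm1}$ by construction, it suffices to verify each composition fixes the remaining generator ($s$ or $s_1$ respectively). This is a direct one-line calculation: the pairs $(A,B)$ and $(\alpha,\beta)$ are, by design, inverse affine changes of variable for the two possible normalisations of the quadratic, so $\Phi(\Psi(s))=\alpha A\,s+\alpha B+\beta=s$ and $\Psi(\Phi(s_1))=A\alpha\,s_1+A\beta+B=s_1$.

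There is no real obstacle here; the only subtlety worth flagging in the write-up is the role of $\mathfrak{z}+1\neq 0$, which enters in two equivalent guises, as the non-vanishing of the discriminant $(\mathfrak{z}+1)^2$ of $(X+1)(X-\mathfrak{z})$ and as the invertibility of the scalar $\mathfrak{z}+1$ needed to define $\Psi(s)$. If $\mathfrak{z}+1=0$, then $s_1$ would satisfy $(s_1+1)^2=0$ and could not be conjugated to an involution, so the hypothesis is not merely technical.
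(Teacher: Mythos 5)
Your proof is correct and follows essentially the same route as the paper: the affine change of variable $s_1\mapsto \tfrac{\mathfrak z+1}{2}s+\tfrac{\mathfrak z-1}{2}$, with inverse $s\mapsto \tfrac{2}{\mathfrak z+1}s_1+\tfrac{1-\mathfrak z}{\mathfrak z+1}$ and $t$ fixed, using $\mathfrak z+1\neq 0$ exactly as the paper does. You are in fact slightly more careful than the paper's one-line argument, since you write out both mutually inverse homomorphisms, verify relations (R0)--(R2) on both sides, and attach the substitution $\tfrac{\mathfrak z+1}{2}s+\tfrac{\mathfrak z-1}{2}$ to the correct direction, namely the map out of $\mathcal{H}(2,\mathfrak z)$ into $\widetilde{\mathbb{C}}[s,t,t^{-1}]/\left\langle s^2-1,\ t^2s-st^2\right\rangle$.
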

\begin{proof}
Let $\mathfrak z \in \mathbb{C}^\times$ with $\mathfrak z + 1 \neq 0$. Put $s^\prime:=\frac{\mathfrak z + 1}{2} s + \frac{\mathfrak z - 1}{2}$. It is easy to check that $s^\prime, t,$ and $ t^{-1}$ generate the algebra $\mathcal{H}(2,\mathfrak z)$ under the above relations (R0), (R1), (R2), and there is a $\mathbb{C}$-algebra isomorphism $ \mathcal{H}(2,1) \xrightarrow{\sim}  \mathcal{H}(2,\mathfrak z)$ defined on generators by
\[s \mapsto \frac{\mathfrak z + 1}{2} s + \frac{\mathfrak z - 1}{2}, \text{ and } t \mapsto t~ ( \text{so }t^{-1} \mapsto t^{-1}).\] 
\end{proof}

\section{Bernstein decomposition and Hecke algebra}\label{BD}
Let $G=\mathrm{GL}_N(\mathcal{D})$ for $N \geq 1$ where $\mathcal{D}$ is a central division algebra over a non-archimedean local field $F$ and the dimension of $\mathcal{D}$ over $F$ is $d^2$. For each Levi subgroup $M$ of $G$, we denote the corresponding complex torus by $\mathrm{X}_{ur}(M)$ which is the group generated by the \textit{unramified quasicharacters} of $M$: which are smooth homomorphisms $\chi: M \rightarrow \mathbb{C}^\times$ such that $\chi$ is trivial on all compact subgroups of $M$.

We consider a cuspidal pair $(M,\sigma)$, where $M$ is a Levi subgroup of $G= \mathrm{GL}_N(\mathcal{D})$ and $\sigma$ is an irreducible supercuspidal representation  of $M$. Define an equivalence relation called \textit{inertial equivalence} on the set of all cuspidal pairs $(M,\sigma)$ as: two cuspidal pairs $(M_1,\sigma_1)$ and $(M_2,\sigma_2)$ are called \textit{inertially equivalent} if there exist $g \in G$ and $\chi \in \mathrm{X}_{ur}(M_2)$ such that 
$(M_2,\sigma_2 \otimes \chi)= ({^g}M_1,{^g}\sigma_1)$, where ${^g}\sigma_1$ is defined by ${^g}\sigma_1(x)=\sigma_1(gxg^{-1})$, for $x \in  {^g}M_1= g^{-1} M_1g$. Let $[M,\sigma]_{G}$ be the inertial equivalence class corresponding to the cuspidal pair $(M,\sigma)$. Denote $\mathcal{B}(G)$ for the Bernstein spectrum of $G$, which is the set of all inertial equivalence classes in $G$. We say that a smooth irreducible representation $(\pi, V)$ has \textit{inertial support} $[M,\sigma]_{G}$  if $(\pi, V)$ appears as a subquotient of a representation parabolically induced from some element of $[M,\sigma]_{G}$. 
\subsection{Bernstein decomposition}
Let $\mathcal{R}(G)$ be the category of all smooth complex representations of $G$. For each $\mathfrak{s} :=[M,\sigma]_{G} \in \mathcal{B}(G)$, one can define a full subcategory $\mathcal{R}^{\mathfrak{s}}(G)$ of $\mathcal{R}(G)$ consisting of those smooth complex representations $(\pi,V) \in \mathcal{R}(G)$ whose each irreducible subquotient has inertial support $\mathfrak{s}=[M,\sigma]_{G}$. the subcategory $\mathcal{R}^{\mathfrak{s}}(G)$ is called the \textit{Bernstein block} of $\mathcal{R}(G)$ corresponding to $\mathfrak{s}=[M,\sigma]_{G}$ .

\begin{theorem}\cite[Theorem 2.10]{Ber}\label{Bernstein}
	The Bernstein decomposition gives a direct product decomposition of $\mathcal{R}(G)$ into indecomposable subcategories $\mathcal{R}^{\mathfrak{s}}(G)$: 
	\[\mathcal{R}(G) = \prod\limits_{\mathfrak{s} \in \mathcal{B}(G)}\mathcal{R}^{\mathfrak{s}}(G),\] where $\mathfrak{s}$ runs over the spectrum $\mathcal{B}(G)$. Concretely, if $(\pi,V) \in \mathcal{R}(G)$, then for each $\mathfrak{s} \in \mathcal{B}(G)$, $V$ has a unique maximal $G$-subspace $V_{\mathfrak{s}} \in \mathcal{R}^{\mathfrak{s}}(G)$ such that \[ V= \bigoplus\limits_{\mathfrak{s} \in \mathcal{B}(G)} V_{\mathfrak{s}}.\]
\end{theorem}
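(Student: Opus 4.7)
The statement is Bernstein's foundational decomposition theorem and is cited from \cite{Ber}, so a self-contained argument would reproduce substantial infrastructure; the plan is to follow Bernstein's original strategy and break it into three steps. First, I would attach an inertial class in $\mathcal{B}(G)$ to every irreducible smooth representation via its cuspidal support. Second, I would show that $\mathrm{Hom}_G$ vanishes between irreducibles lying over distinct inertial classes. Third, I would extend these two facts from irreducible to arbitrary smooth representations, thereby obtaining both the direct sum formula for each $V$ and the product decomposition at the level of categories.

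For the first step, I would invoke the cuspidal support theorem of Bernstein--Zelevinsky--Jacquet: every irreducible smooth $\pi$ is a subquotient of a parabolic induction $i_P^G(\sigma)$ from a supercuspidal pair $(M,\sigma)$, and the pair $(M,\sigma)$ is unique up to $G$-conjugacy. Quotienting by unramified twisting yields a well-defined inertial support $[M,\sigma]_G \in \mathcal{B}(G)$, after which $\mathcal{R}^{\mathfrak{s}}(G)$ is defined exactly as in the preamble of the theorem. For the second step, I would combine Frobenius reciprocity (first and second adjointness of parabolic induction) with the geometric lemma of Bernstein--Zelevinsky describing the composition factors of Jacquet modules of $i_P^G(\sigma)$: a nonzero $G$-morphism between $\pi_1$ and $\pi_2$ forces a coincidence of their cuspidal supports up to $G$-conjugation and unramified twist, i.e.\ equality of inertial class.

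For the third step, I would use that every finitely generated smooth representation of $G$ has finite length, so each $v \in V$ sits in a finite-length subrepresentation that decomposes into $\mathfrak{s}$-parts via steps one and two; summing these subrepresentations produces $V = \sum_{\mathfrak{s}} V_{\mathfrak{s}}$, and directness of the sum follows from the Hom-vanishing. The main obstacle is to upgrade this pointwise block decomposition to a genuine categorical product, i.e.\ to produce central idempotents in the endomorphism ring of the identity functor on $\mathcal{R}(G)$ indexed by $\mathcal{B}(G)$. Bernstein accomplishes this by computing the center of $\mathcal{R}(G)$ (the Bernstein center) as a product of affine coordinate rings indexed by $\mathcal{B}(G)$, using second adjointness and a careful study of universal parabolic inductions. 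A more hands-on alternative, in the spirit of the remainder of the paper, is to use the theory of types to produce explicit idempotents in the Hecke algebra projecting onto each Bernstein component; this latter route is the one best aligned with the Bushnell--Kutzko / S\'echerre--Stevens machinery invoked later.
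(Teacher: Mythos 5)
The paper offers no argument for this statement: it is quoted directly from \cite{Ber}, so the only meaningful comparison is with Bernstein's own proof, which your plan tracks in outline (cuspidal support, orthogonality of classes, passage from irreducibles to arbitrary smooth representations) but with two genuine gaps. First, your step three rests on the claim that every finitely generated smooth representation of $G$ has finite length; this is false. Already for $G=\mathrm{GL}_1(F)=F^\times$ the cyclic representation $\mathrm{ind}_{\mathcal{O}_F^\times}^{F^\times}\mathbf{1}\cong\mathbb{C}[t,t^{-1}]$ has infinite length (every unramified character is a quotient), so the subrepresentation generated by a vector $v\in V$ need not have finite length, and the reduction ``decompose finite-length pieces and sum'' does not get off the ground as stated. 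What is true, and what Bernstein actually uses, is the weaker finiteness statement that a finitely generated representation has its cuspidal supports confined to finitely many inertial classes, proved via finiteness properties of Jacquet functors and the noetherian/uniform admissibility results in \cite{Ber}.

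Second, and more seriously, even for finite-length representations the decomposition $V=\bigoplus_{\mathfrak{s}}V_{\mathfrak{s}}$ does not follow from your step two. Hom-vanishing between irreducibles with distinct inertial support is essentially vacuous (a nonzero map between irreducibles is an isomorphism) and says nothing about extensions; the real content of the theorem is that $\mathrm{Ext}^1$ vanishes between representations whose irreducible subquotients lie in different classes, i.e.\ that every extension of an object of $\mathcal{R}^{\mathfrak{s}'}(G)$ by an object of $\mathcal{R}^{\mathfrak{s}}(G)$ with $\mathfrak{s}\neq\mathfrak{s}'$ splits. This splitting is exactly what Bernstein proves (via the finiteness results above together with the structure of parabolically induced representations and, in modern treatments, second adjointness), and it is not a formal consequence of Frobenius reciprocity plus the geometric lemma. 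You do point to the Bernstein center and to type-theoretic idempotents as the mechanism for the categorical upgrade, but the types route is circular here: the Bushnell--Kutzko statement that an $\mathfrak{s}$-type $(K,\lambda)$ yields $\mathcal{R}_\lambda(G)=\mathcal{R}^{\mathfrak{s}}(G)$ (\cite[Theorem (4.3)]{BK98}, the equivalence (\ref{Eq:type}) used later in this paper) already presupposes the Bernstein decomposition, as does the product description of the center. So the plan needs to be corrected to (i) replace the finite-length claim by Bernstein's finiteness of components for finitely generated representations, and (ii) supply the splitting/Ext-vanishing argument, which is the heart of \cite[Theorem 2.10]{Ber} rather than a routine consequence of cuspidal support theory.
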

 Moreover, if $\mathfrak{s},\mathfrak{s}^\prime \in \mathcal{B}(G)$ with $\mathfrak{s}  \neq \mathfrak{s}^\prime$, then $\mathrm{Hom}_{G}(\mathcal{R}^{\mathfrak{s}}(G),\mathcal{R}^{\mathfrak{s}^\prime}(G))=0$.

\subsection{Bernstein decomposition of Hecke algebra} For each equivalence class $\mathfrak s \in \mathcal{B}(G)$, let $\mathcal{H}\left(G\right)^\mathfrak{s}$ be the two-sided ideal of $\mathcal{H}\left(G\right)$ corresponding to all smooth representations $(\pi,V)$ of $G$ of inertial support $\mathfrak{s}$. In particular, $\mathcal{H}\left(G\right)^\mathfrak{s}$ is the unique $G$-subspace of $\mathcal{H}\left(G\right)$ lying in $\mathcal{R}^\mathfrak{s}(G)$ such that it is maximal for this property. Each ideal $\mathcal{H}(G )^{\mathfrak{s}}$ is a non-commutative, non-unital, non-finitely-generated,
non-reduced $\mathbb{C}$-algebra and it is characterized by $\mathcal{H}(G )^{\mathfrak{s}} \cdot V= V$ for all $V \in \mathcal{R}^{\mathfrak{s}}(G)$ (see \cite{ABP, Sol} for more details).

The Bernstein decomposition (Theorem \ref{Bernstein}) induces a factorization of the Hecke algebra $\mathcal{H}(G)$ in two-sided ideals as: 
\begin{equation}\label{Hecke:decomposition}
\mathcal{H}(G) = \bigoplus_{s \in \mathcal{B}(G)} \mathcal{H}(G)^{\mathfrak{s}}.    
\end{equation}
The $\mathbb{C}$-algebra $\mathcal{H}(G)^{\mathfrak{s}}$ is called the \emph{Bernstein block} of the Hecke algebra  $\mathcal{H}(G)$ corresponding to the inertial class $\mathfrak{s}$. If $\mathcal{H}(G)^{\mathfrak{s}}$-Mods denote the category of all unitary $\mathcal{H}(G)^{\mathfrak{s}}$-modules then there is a natural equivalence of categories:
\begin{equation}\label{Eq:s-level}
    \mathcal{R}^{\mathfrak{s}}(G) \simeq \mathcal{H}(G)^{\mathfrak{s}} \text{-Mods.}
\end{equation}

\subsection{Spherical Hecke algebra}\label{SHA}
Let $(\lambda , W)$ be an irreducible smooth representation of a compact
open subgroup $K$ of $G$. The $\lambda$-\emph{spherical Hecke algebra} $\mathcal{H}(G,(K,\lambda))$ associated to the pair $(K,\lambda)$ is the vector space of all compactly
supported functions $\Phi: G \rightarrow \mathrm{End}_{\mathbb{C}}(W^\vee)$ satisfying
\begin{equation*}
	\Phi(k_1 g k_2) = \lambda^\vee(k_1) \Phi(g) \lambda^\vee(k_2) ~ \text{ for all } k_1, k_2 \in K \text{ and } g \in G,
\end{equation*}
where $(\lambda^\vee,W^\vee)$ denote the contragredient representation of $(\lambda, W)$. The standard convolution operation `$*$' defined by 
\begin{equation*} 
	\Phi_1 * \Phi_2: h \longmapsto \int\limits_{G} \Phi_1(g) \Phi_2(g^{-1}h)~dg, \text{ for } \Phi_1,\Phi_2 \in \mathcal{H}(G,(K,\lambda)),
\end{equation*} 
gives the space $\mathcal{H}(G,(K,\lambda))$ a structure of an associative $\mathbb{C}$-algebra with identity. Whenever the compact open subgroup $K$ of $G$ is explicitly mentioned, we write $\mathcal{H}(G,\lambda)$ for the $\lambda$-spherical Hecke algebra $\mathcal{H}(G,(K,\lambda))$ by dropping the notation $K$.  

If $(\pi, V)$ is a smooth complex representation of $G$, then the $\lambda$-\textit{isotypic subspace} $V^\lambda$ of $V$ is the sum of all irreducible $K$-subspaces of $V$, which are equivalent to $\lambda$. Let $\mathcal{R}_\lambda (G)$ denote the subcategory of $\mathcal{R}(G)$ whose objects are the smooth complex representations
$(\pi, V)$ of $G$ generated by the $\lambda$-isotypic subspace $V^\lambda$ of $V$. If $\mathcal{H}(G,\lambda)$-Mods denote the category of all unitary left-modules over the Hecke algebra $\mathcal{H}(G,\lambda)$, there is a functor
\begin{equation}\label{functor_M}
	\mathcal{M}_\lambda : \mathcal{R}_\lambda (G) \rightarrow \mathcal{H}(G,\lambda)\text{-Mods},
\end{equation}
defined by
\begin{equation*}
	\pi \mapsto \mathrm{Hom}_K(\lambda,\pi).
\end{equation*}
The functor $\mathcal{M}_\lambda$ plays an important role in the context of the theory of types.

\subsection{Notion of types and G-covers}
In \cite{BK98}, Bushnell and Kutzko described the abelian category $\mathcal{R}^{\mathfrak{s}}(G)$ in terms of a class of pairs $(K,\lambda)$ consisting of irreducible smooth representations $\lambda$ of compact open subgroups $K$ of $G$. The pair $(K,\lambda)$ is a \textit{type} in $G$ if the subcategory $\mathcal{R}_\lambda (G)$ is closed under subquotients in $\mathcal{R}(G)$.

For any finite subset $\mathfrak{S} \subset \mathcal{B}(G)$, denote $\mathcal{R}^{\mathfrak{S}}(G)= \prod\limits_{\mathfrak{s} \in \mathfrak{S}}\mathcal{R}^{\mathfrak{s}}(G)$ i.e., $\mathcal{R}^{\mathfrak{S}}(G)$ is the full subcategory of $\mathcal{R}(G)$ consisting of those smooth complex representations $(\pi,V) \in \mathcal{R}(G)$ whose each irreducible subquotient has \textit{inertial support} contained in $\mathfrak{S}$. Let $\lambda$ be an irreducible smooth representation of a compact open subgroup $K$ of $G$. For a finite subset $\mathfrak{S}$ of $\mathcal{B}(G)$, the pair $(K,\lambda)$ is said to be an $\mathfrak{S}$-\textit{type} in $G$ if for any irreducible smooth representation $(\pi,V)$ of $G$, the representation $(\pi,V) \in \mathcal{R}^{\mathfrak{S}}(G)$ if and only if $\pi$ contains $\lambda$ i.e. $\mathrm{Hom}_K(\lambda,\pi) \neq 0$. In \cite[Theorem (4.3)]{BK98}, Bushnell and Kutzko showed that if the pair $(K,\lambda)$ is an $\mathfrak{S}$-type in $G$ , we have
\begin{equation}\label{Eq:type}
  \mathcal{R}_{\lambda}(G)=\mathcal{R}^{\mathfrak{S}}(G) \text{ as subcategories of } \mathcal{R}(G), 
\end{equation}
and the functor $\mathcal{M}_\lambda $ as mentioned in Eq. (\ref{functor_M}) induces an equivalence of the categories:
	\begin{equation}\label{Eq:func:M}
		\mathcal{M}_\lambda : \mathcal{R}_\lambda (G) \xrightarrow{\sim} \mathcal{H}(G,\lambda)\text{-Mods}.
	\end{equation}
Note that when $\mathfrak{S}$ is a singleton set $\{\mathfrak{s}\}$ in $\mathcal{B}(G)$, we write $\mathfrak{s}$-\textit{type} instead of $\{\mathfrak{s}\}$-type.

Let $M$ be a proper Levi subgroup of $G$. Consider the pair $(K_{M},\lambda_{M})$ consisting of a compact open subgroup $K_M$ of $M$ and an irreducible smooth representation $\lambda_M$ of the subgroup $K_M$. Let $K$ be a compact open subgroup of $G$ and $\lambda$ be an irreducible smooth representation of $K$.
\begin{definition}[G-cover]
	 The pair $(K,\lambda)$ is called a $G$-\textit{cover} of $(K_M,\lambda_M)$ if  for any opposite pair of parabolic subgroups $P=MU$ and $P^-=MU^-$ with common Levi factor $M$ and unipotent radicals $U$ and $U^-$ respectively, the pair $(K,\lambda)$ satisfies the following properties: (see \cite[\S 3.3]{MP} for this version definition)
	\begin{enumerate}
		\item[(i)]  $K$ decomposes w.r.t $(U,M,U^- )$ i.e., $ K = (K \cap U)(K \cap M)(K \cap U^-).$
		\item[(ii)] $K_M = K \cap M$, $\lambda|_{K_M} =\lambda_M$ and $K \cap U, K \cap U^-$  is contained in the kernel of $\lambda$.
		\item[(iii)]  For any smooth representation $(\pi,V)$ of $G$; the natural projection $V$ to the Jacquet module $V_{U}$ induces an injection on $V^\lambda$.
	\end{enumerate}
\end{definition}
Let $L$ and $M$ are Levi subgroups of $G$ such that $L$ is contained in $M$ and the pair $(K_L,\lambda_L)$ is consisting of a compact open subgroup $K_L$ of $L$ together with an irreducible representation $\lambda_L$ of the subgroup $K_L$, then the following transitivity properties of the cover hold:
\begin{itemize}
	\item[(1)] If $(K,\lambda)$ is a $G$-{cover} of $(K_M,\lambda_M)$ and $(K_M,\lambda_M)$ is a $M$-{cover} of $(K_L,\lambda_L)$, then $(K,\lambda)$ is a $G$-{cover} of $(K_L,\lambda_L)$.
	\item[(2)] Suppose $(K,\lambda)$ is a $G$-{cover} of $(K_L,\lambda_L)$. Denote $K \cap M$ by $K_M$, and $\lambda|_{K_M}$ by $ \lambda_M$. Then $(K,\lambda)$ is a $G$-{cover} of $(K_{M},\lambda_{M})$ and $(K_M,\lambda_M)$ is a $M$-{cover} of $(K_L,\lambda_L)$.
\end{itemize}

\section{Classification of Representations of $\mathrm{GL}_2(\mathcal{D})$}\label{tadic}
In \cite{Tad90}, Marko Tadi\'c gave the classification of the irreducible smooth representations of $\mathrm{GL}_N(\mathcal{D})$ where $\mathcal{D}$ is a central division algebra over a local field $F$. Specifically, he gave an irreducibility criterion of the principal series representations of $\mathrm{GL}_N(\mathcal{D})$  by using the  Langlands classification theory when the characteristic of $F$ is $0$. We now recall a $\mathrm{GL}_2(\mathcal{D})$ case of Tadi\'c results.

Let $M_0$ be the diagonal subgroup $\mathcal{D}^\times \times \mathcal{D}^\times$ of $G=\mathrm{GL}_2(\mathcal{D})$. Let $P_0$ be the standard minimal parabolic subgroup of $G$ consisting of upper triangular matrices in $G$ with $P_0 = M_0 N_0$, where the unipotent radical $N_0$ is the group of upper triangular matrices with diagonal entries $1$. Let $\sigma_1$ and $\sigma_2$ be two irreducible smooth representations of $\mathcal{D}^\times$ and $\sigma_1 \times \sigma_2$ denote the normalized parabolically induced representation $\mathrm{Ind}^G_{P_0}(\sigma_1 \otimes \sigma_2)$ of $G$. For an irreducible representation $\sigma$ of $\mathcal{D}^\times$, let $a(\sigma)$ denote the length of the segment (say $\Delta$) which determines the essentially square-integrable representation $\mathrm{JL}(\sigma)$  (of the form $Q(\Delta)$), which is the Jacquet–Langlands lift of $\sigma$. Then, the representation $\sigma_1 \times \sigma_2$ is reducible if and only if $\sigma_2 \simeq \sigma_1 \otimes |\cdot|^{\pm \frac{a(\sigma_1)}{d}}$ (cf. \cite[Lemma 2.5 and 4.2]{Tad90}). If $\pi$ is an irreducible smooth representation of $\mathrm{GL}_2(\mathcal{D})$,  $\pi$ satisfies  one of the following four exhaustive kinds uniquely: (see \cite[Theorem 2.2]{Rag} for more details)
\begin{itemize}
    \item[(I)] $\pi$ is a purely supercuspidal representation, that is $\pi$ lies in the Bernstein block attached to the inertial class $\mathfrak s=[G,\pi]_G$.
    \item[(II)] $\pi$ is an irreducible parabolically induced representation $\sigma_1 \times \sigma_2$ that is, there are some irreducible representations $\sigma_1$, $\sigma_2$ of $\mathcal{D}^\times$ with $\sigma_2 \not\simeq \sigma_1 \otimes |\cdot|^{\pm \frac{a(\sigma_1)}{d}}$ such that $\pi = \mathrm{Ind}^G_{P_0}(\sigma_1 \otimes \sigma_2)$. This kind of representation is said to be \textit{irreducible principal-series representation}.
    \item[(III)] $\pi$ is a finite-dimensional representation, and so $\pi$ is one-dimensional of the form $\chi \circ \mathrm{Nrd}_{\mathcal{M}/F}$ for some quasi-character $\chi$ of $F^\times$. Here $\mathrm{Nrd}_{\mathcal{M}/F}$ denotes the reduced norm map from $M_2(\mathcal{D})$ to $F$. 
    \item[(IV)] $\pi$ is an infinite-dimensional irreducible quotient of a reducible parabolically induced representation ~ $\sigma_1 \times \sigma_2$. This kind of representation is called a special representation. There exists a unique irreducible (infinite-dimensional) quotient in the parabolically induced representation $\sigma \times \sigma ~ |\cdot|^{\pm \frac{a(\sigma)}{d}}$. The unique irreducible quotient $\mathrm{St}(\sigma)$ of ~ $\sigma ~ |\cdot|^{- \frac{a(\sigma)}{2d}} \times \sigma ~ |\cdot|^{ \frac{a(\sigma)}{2d}}$ is said to be a generalized \emph{Steinberg} representation and the unique irreducible quotient $\mathrm{Sp}(\sigma)$ of ~ $\sigma ~ |\cdot|^{ \frac{a(\sigma)}{2d}} \times \sigma ~ |\cdot|^{ -\frac{a(\sigma)}{2d}}$ is said to be a generalized \emph{Speh} representation of $G$. 
\end{itemize}

\section{Hecke algebras for Inner Forms}
In the series of papers \cite{Sec05, SS08, SS12}, S\'echerre and Stevens have generalized the Bushnell-Kutzko theory of types to $G=\mathrm{GL}_N(\mathcal{D})$ by constructing types for all Bernstein block $\mathcal{R}(G)^\mathfrak{s}$ corresponding to each inertial class $\mathfrak s$ in $G$ using the theory of covers and the notion of a common approximation of simple characters.

\subsection{Construction of types}\label{Construct:types} Fix an inertial equivalence class $\mathfrak{s}=[L,\sigma]_G$ in $G$. There exists a subspace decomposition $\mathscr{V}= \bigoplus\limits_{i=1}^k \mathscr{V}_i$ of the $N$-dimensional $\mathcal{D}$-vector space $\mathscr{V}$ such that $L$ is the stabilizer of the decomposition, in particular if $n_i$ denote the dimension of $\mathscr{V}_i$ over $D$, then $L= \prod\limits_{i=1}^k \mathrm{GL}_{n_i}(\mathcal{D})$. Also, there exists an irreducible cuspidal representation $\sigma_i$ of $\mathrm{GL}_{n_i}(\mathcal{D})$ for each $i \in \mathcal{I}_{\mathfrak s}=\{1, \cdots, k\}$ such that $\sigma= \bigotimes\limits_{i=1}^k \sigma_i$.
\begin{definition}
    Define an equivalence relation $\sim$ on $\mathcal{I}_{\mathfrak s}=\{1, \cdots, k\}$: for $i,j \in \mathcal{I}_{\mathfrak s}$
    \begin{equation*}
        i \sim j \text{ if and only if } n_i=n_j \text{ and } [\mathrm{GL}_{n_i}(\mathcal{D}), \sigma_i]_{\mathrm{GL}_{n_i}(\mathcal{D})} = [\mathrm{GL}_{n_j}(\mathcal{D}), \sigma_j]_{\mathrm{GL}_{n_j} (\mathcal{D})}.
    \end{equation*}
\end{definition}
We can make the assumption: $\sigma_i=\sigma_j$ if $i \sim j$, by replacing the cuspidal pair $(L,\sigma)$ with an inertially equivalent pair if necessary. Suppose there are $\ell$ equivalence classes on $\mathcal{I}_{\mathfrak s}$ denoted by $ \mathcal{I}_1,...,\mathcal{I}_\ell$. We put $\mathcal{Y}_t= \bigoplus\limits_{i \in \mathcal{I}_t} \mathscr{V}_i$. We denote $M$ for the subgroup of $G$, which stabilizes the decomposition $\mathscr{V}= \bigoplus\limits_{t=1}^\ell \mathcal{Y}_t$. Then, $M$ is a Levi subgroup of $G$ such that $M$ contains $L$.

Corresponding to the $G$-inertial equivalence class $\mathfrak{s}=[L,\sigma]_G$, we have the cuspidal $L$-inertial class $\mathfrak{s}_L=[L,\sigma]_L$. In \cite{Sec05, SS08}, they showed that there exists a maximal simple $[\mathrm{GL}_{n_i}(\mathcal{D}), \sigma_i]_{\mathrm{GL}_{n_i}(\mathcal{D})}$-type $(K_i, \lambda_i)$ (unique up to $\mathrm{GL}_{n_i}(\mathcal{D})$-conjugacy) contained in $\sigma_i$, and if we set $K_L=\prod\limits_{i=1}^k K_i$ with $\lambda_L= \bigotimes\limits_{i=1}^k \lambda_i$, the pair $(K_L, \lambda_L)$ is an $\mathfrak{s}_L$-type in $L$. Then, they constructed an $M$-cover $(K_M, \lambda_M)$ of the $\mathfrak{s}_L$-type $(K_L, \lambda_L)$ and therefore, $\mathfrak{s}_M=[L,\sigma]_M$-type in $M$. Later, they constructed a $G$-cover $(K,\lambda)$ of the type $(K_M, \lambda_M)$ in \cite[Theorem 8.2]{SS12}. Using transitivity properties of covers, it follows that $(K, \lambda)$ is an $\mathfrak{s}=[L,\sigma]_G$-type in $G$. 

\begin{lemma}\label{Morita:equivalence}
 For each inertial class $\mathfrak{s} \in \mathcal{B}(G)$, there exists an $\mathfrak{s}$-type $(K,\lambda)$ in $G$ such that the two sided ideal $\mathcal{H}(G)^\mathfrak{s}$ is Morita equivalent to the $\lambda$-spherical Hecke algebra $\mathcal{H}(G,\lambda)$.
\end{lemma}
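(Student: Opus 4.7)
The plan is to chain together the constructions already recalled in the paper. First, for the given inertial equivalence class $\mathfrak{s}=[L,\sigma]_G \in \mathcal{B}(G)$, I would invoke the S\'echerre--Stevens construction summarized in Section \ref{Construct:types}: one produces the cuspidal $L$-type $(K_L,\lambda_L)$ from maximal simple types $(K_i,\lambda_i)$ inside each $\sigma_i$, then the intermediate $M$-cover $(K_M,\lambda_M)$, and finally a $G$-cover $(K,\lambda)$ of $(K_M,\lambda_M)$ from \cite[Theorem 8.2]{SS12}. By transitivity of covers, $(K,\lambda)$ is a $G$-cover of $(K_L,\lambda_L)$, and hence by the general principle from \cite{BK98} that a $G$-cover of an $\mathfrak{s}_L$-type is an $\mathfrak{s}$-type, the pair $(K,\lambda)$ is an $\mathfrak{s}$-type in $G$.

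Next, with this $\mathfrak{s}$-type in hand, I would combine the two categorical equivalences already stated in the preliminaries. On the one hand, Bushnell--Kutzko's theorem (Eq. (\ref{Eq:type}) and the functor in Eq. (\ref{Eq:func:M})) gives
\[
\mathcal{R}^{\mathfrak{s}}(G) \;=\; \mathcal{R}_{\lambda}(G) \;\xrightarrow[\mathcal{M}_\lambda]{\sim}\; \mathcal{H}(G,\lambda)\text{-Mods}.
\]
On the other hand, the Bernstein block interpretation in Eq. (\ref{Eq:s-level}) yields
\[
\mathcal{R}^{\mathfrak{s}}(G) \;\simeq\; \mathcal{H}(G)^{\mathfrak{s}}\text{-Mods}.
\]
Composing these two equivalences produces an equivalence of categories
\[
\mathcal{H}(G)^{\mathfrak{s}}\text{-Mods} \;\simeq\; \mathcal{H}(G,\lambda)\text{-Mods},
\]
and by the standard characterization of Morita equivalence (two associative $\mathbb{C}$-algebras, possibly non-unital but idempotented as both algebras here are, are Morita equivalent if and only if their categories of non-degenerate/unitary modules are equivalent) this is exactly what is meant by $\mathcal{H}(G)^{\mathfrak{s}}$ being Morita equivalent to $\mathcal{H}(G,\lambda)$.

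The only genuinely nontrivial input is the \emph{existence} of the $\mathfrak{s}$-type $(K,\lambda)$, which in the general $\mathrm{GL}_N(\mathcal{D})$ setting is the deep theorem of S\'echerre and Stevens \cite{SS12}; everything else is a formal consequence of the Bernstein decomposition of $\mathcal{H}(G)$ together with the categorical equivalence supplied by any type. I therefore anticipate the main obstacle being merely to cite these two ingredients correctly and to check that the category $\mathcal{H}(G)^{\mathfrak{s}}\text{-Mods}$ appearing in (\ref{Eq:s-level}) is the same notion of module category used in the Morita statement, which follows since $\mathcal{H}(G)^{\mathfrak{s}}\cdot V = V$ characterizes the Bernstein block precisely as the non-degenerate modules over the idempotented algebra $\mathcal{H}(G)^{\mathfrak{s}}$.
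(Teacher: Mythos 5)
Your proposal is correct and follows essentially the same route as the paper: the existence of the $\mathfrak{s}$-type is taken from S\'echerre--Stevens \cite{SS12}, and the Morita equivalence is obtained by composing the equivalences (\ref{Eq:type}) and (\ref{Eq:func:M}) with (\ref{Eq:s-level}) to identify $\mathcal{H}(G)^{\mathfrak{s}}$-Mods with $\mathcal{H}(G,\lambda)$-Mods. The additional detail you give on the cover construction and on matching the non-degenerate/unitary module conventions is consistent with the paper's Section \ref{Construct:types} and with its characterization $\mathcal{H}(G)^{\mathfrak{s}} \cdot V = V$.
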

\begin{proof}
    For $\mathfrak{s} \in \mathcal{B}(G)$, the existence of an $\mathfrak{s}$-type in $G$ follows from the work of S\'echerre and Stevens in \cite{SS12}. Let $(K,\lambda)$ be an $\mathfrak{s}$-type in $G$. Then, combining equivalence (\ref{Eq:type}) and (\ref{Eq:func:M}) we have an equivalence of categories:
    \begin{equation}
        \mathcal{R}^{\mathfrak{s}} (G) \simeq  \mathcal{H}(G,\lambda)\text{-Mods}.
    \end{equation}
 Now this equivalence together with the equivalence (\ref{Eq:s-level}) induces an equivalence of categories  
 \begin{equation*}
        \mathcal{H}(G)^{\mathfrak{s}} \text{-Mods} \simeq \mathcal{H}(G,\lambda)\text{-Mods},
    \end{equation*}
    which gives us a Morita equivalence between $\mathcal{H}(G)^{\mathfrak{s}}$ and $  \mathcal{H}(G,\lambda)$.
\end{proof}

\subsection{S\'echerre-Stevens decomposition}
We now recall the main result in \cite{SS12} dealing with the decomposition of $\mathcal{H}(G,\lambda)$ as a product of affine Hecke algebras of type $A$.

Denote $H=\mathrm{GL}_m(\mathcal{D})$ for some $m \geq 1$, and let $\tau$ be an irreducible cuspidal representation of $H$. We can attach two numerical invariants $n(\tau)$ and $s(\tau)$ associated to $\tau$. The group of unramified characters $\chi$ of $H$ such that $\tau \simeq \tau \chi$, is finite (cf. \cite[Proposition 4.1]{Sec09}) and the cardinality of this finite group is called the {\em torsion number} $n(\tau)$ of $\tau$. For the other invariant, we take $\tilde{H}  = \mathrm{GL}_{2m}(\mathcal{D})$ and let $\tilde{P}$ be the standard parabolic subgroup of $\tilde{H}$ with Levi subgroup $H \times H$. There exists a unique real number $s(\tau)$, called  the {\em reducibility number} of $\tau$ such that the normalized parabolic induced representation $\mathrm{ Ind}_{\tilde{P}}^{\tilde{H}}\, (\tau \bigotimes \tau ~ |\cdot|^{\frac{s(\tau)}{d}})$ is reducible (cf. \cite[Theorem 4.6]{Sec09}). Note that $s(\tau)=\pm a(\tau)$ as defined in Section \ref{tadic}.

For a natural number $r$, let $\mathcal{H}(r, \mathfrak{z})$ denote the affine Hecke algebra of type $\Tilde{A}_{r-1}$ with parameter $\mathfrak{z} \in \mathbb{C}^\times$. Let $\mathfrak{s}=[L,\sigma]_G \in \mathfrak B(G)$. Recall all the notations described in \S\ref{Construct:types} corresponding to the inertial class $\mathfrak{s}$. S\'echerre and Stevens showed:
\begin{theorem}\cite[Main Theorem]{SS12} \label{SS}
There exists an $\mathfrak s$-type $(K, \lambda)$ in $G$ such that
\[\mathcal H(G,\lambda)\cong \bigotimes\limits_{i=1}^\ell \mathcal H(r_i,q_F^{\mathfrak{f}_i}),\]
where $r_i$ is the cardinality of $\mathcal{I}_i$ and $\mathfrak{f}_i = n(\sigma_j) \cdot s(\sigma_j)$ for any $j \in \mathcal{I}_i$.
\end{theorem}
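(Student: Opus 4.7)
The plan has two phases: first reduce the computation to the ``single-block'' case where all the cuspidals involved are inertially equivalent, and then identify the resulting algebra as an affine Hecke algebra of type $\tilde A$ with an explicit parameter.

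For the reduction, I would use the fact, already built into the construction in \S\ref{Construct:types}, that $M = \prod_{t=1}^\ell M_t$ with $M_t = \mathrm{GL}_{r_t n_{i_t}}(\mathcal{D})$, and that the $M$-cover $(K_M,\lambda_M)$ is an external tensor product of $M_t$-covers $(K_{M_t},\lambda_{M_t})$, each assembled from $r_t$ copies of a single maximal simple type. Because irreducible cuspidals lying in distinct inertial classes of $L$ do not intertwine, the standard intertwining formulas for $G$-covers force
$$\mathcal{H}(G,\lambda) \;\cong\; \bigotimes_{t=1}^\ell \mathcal{H}(M_t,\lambda_{M_t}),$$
so it suffices to treat each factor separately.

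For a single factor $\mathcal{H}(M_t,\lambda_{M_t})$ I would exploit Sécherre's analysis of intertwining for maximal simple types: the support of this Hecke algebra is controlled by the normalizer of the $L_t$-inertial class $[L_t,\sigma_{i_t}^{\otimes r_t}]_{L_t}$, whose image in the relative Weyl group is the extended affine symmetric group $\mathbb{Z} \rtimes S_{r_t}$, i.e.\ the Weyl group of type $\tilde A_{r_t-1}$. The generators $s_1,\dots,s_{r_t-1}$ come from simple reflections swapping adjacent copies of $\mathrm{GL}_{n_{i_t}}(\mathcal{D})$ in $L_t$, and $t^{\pm 1}$ comes from the cyclic shift associated with the unramified-twist stabilizer of $\sigma_{i_t}$. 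The braid and commutation relations (R2)--(R5) then follow from root-system geometry together with the cover property of $(K_{M_t},\lambda_{M_t})$ over $(K_{L_t},\lambda_{L_t})$; a support/dimension count over each double coset upgrades the resulting surjection $\mathcal{H}(r_t,\mathfrak{z}_t) \twoheadrightarrow \mathcal{H}(M_t,\lambda_{M_t})$ to an isomorphism.

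The technical heart of the proof is the determination of the quadratic parameter $\mathfrak{z}_t = q_F^{\mathfrak{f}_t}$ in the relation $(s_i+1)(s_i - \mathfrak{z}_t)=0$. One computes the convolution $\Phi_{s_i} \ast \Phi_{s_i}$ directly: its support lies over the trivial double coset, and its leading coefficient equals the index of a certain open subgroup in $K_{M_t}$ weighted by the intertwining scalar of $\sigma_{i_t}$. Matching this index against the reducibility point of $\mathrm{Ind}_{\tilde P}^{\tilde H}(\sigma_{i_t} \otimes \sigma_{i_t}|\cdot|^{s(\sigma_{i_t})/d})$, which defines $s(\sigma_{i_t})$, together with the cardinality $n(\sigma_{i_t})$ of the group of unramified twists fixing $\sigma_{i_t}$, produces the exponent $\mathfrak{f}_t = n(\sigma_{i_t}) s(\sigma_{i_t})$. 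This last matching is the principal obstacle: it requires the precise dictionary between the reducibility threshold on the smooth-representation side and the Hecke-algebra parameter, and rests on Sécherre's intertwining analysis for simple types together with a delicate Iwahori/Jacquet-module calculation in $\mathrm{GL}_{2 n_{i_t}}(\mathcal{D})$. The rest of the argument is essentially formal bookkeeping around covers and products.
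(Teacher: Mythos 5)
Theorem \ref{SS} is not proved in this paper at all: it is quoted verbatim from the Main Theorem of \cite{SS12}, with the identification of the exponent $\mathfrak{f}_i$ going back to \cite{Sec09} (cf.\ Remark \ref{residue:remark}). So there is no internal proof to compare against; what you have written is a sketch of the external Sécherre--Stevens argument. Your architecture does match theirs in outline --- factor $\mathcal{H}(G,\lambda)$ over the blocks $\mathcal{I}_1,\dots,\mathcal{I}_\ell$ using that cuspidals in distinct inertial classes do not intertwine, then identify each factor with an affine Hecke algebra of type $\tilde A_{r_t-1}$ and pin down the parameter --- but as a proof it has genuine gaps, because the steps you dispatch as ``standard intertwining formulas'' and ``a support/dimension count'' are exactly the hard content of \cite{Sec05}, \cite{SS08} and \cite{SS12}: the existence of the $G$-cover and the computation of its intertwining rest on the theory of simple/semisimple characters and common approximations, and are not routine root-system bookkeeping. (A small slip along the way: the support is indexed by the extended affine Weyl group $\mathbb{Z}^{r_t}\rtimes S_{r_t}$, not $\mathbb{Z}\rtimes S_{r_t}$.)

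The more serious gap is the parameter. Your plan is to compute $(s_i+1)(s_i-\mathfrak{z}_t)=0$ by a direct convolution and then ``match'' the resulting index against the reducibility point defining $s(\sigma)$; you yourself flag this matching as the principal obstacle, and it is, because it presupposes the dictionary between Hecke-algebra parameters and reducibility points that is the thing to be proved. In the literature the logic runs the other way: the quadratic parameter of a simple type is first computed as $q_F^{f}$, where $f$ is the residue degree of an unramified extension attached to the underlying simple stratum (this comes from a finite-field Hecke algebra computation, cf.\ Remark \ref{residue:remark}), and only afterwards does \cite{Sec09} prove $f=n(\sigma)\,s(\sigma)$ by using the known structure of the Hecke algebra of the cover to compute the reducibility point of $\mathrm{Ind}_{\tilde P}^{\tilde H}(\sigma\otimes\sigma|\cdot|^{s/d})$. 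So the equality $\mathfrak{f}_i=n(\sigma_j)s(\sigma_j)$ is itself a theorem (the Tadić conjecture $(U0)$ paper), not an identity extractable from the proposed convolution-plus-matching. If you intend to prove Theorem \ref{SS} rather than cite it, you must either reproduce that chain of results or find a genuinely independent computation of the parameter; as written, the sketch defers precisely the deep parts.
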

\begin{remark}\label{residue:remark}
     For each $\sigma_j$, there exists a finite unramified extension $\tilde{F}_j$ of $F$ such that $\mathfrak{f}_i$ is the residue degree of $\tilde{F}_j$ over $F$ (cf. \cite[\S 4.2, Theorem 4.6]{Sec09}).
\end{remark}


\section{Cuspidal blocks of $\mathrm{GL}_N(\mathcal{D})$}
Consider the subcategory $\mathcal{R}(\mathrm{GL}_N(\mathcal{D}))_{\mathrm{cusp}}$ of the category $\mathcal{R}(\mathrm{GL}_N(\mathcal{D}))$ of all smooth complex representations of $\mathrm{GL}_N(\mathcal{D})$ defined by
\[\mathcal{R}(\mathrm{GL}_N(\mathcal{D}))_{\mathrm{cusp}} := \prod\limits_{\mathfrak{s} \in \mathcal{B}(\mathrm{GL}_N(\mathcal{D}))_{\mathrm{cusp}}}\mathcal{R}^{\mathfrak{s}}(\mathrm{GL}_N(\mathcal{D})), \]
 where $\mathfrak{s}$ runs over the subset $\mathcal{B}(\mathrm{GL}_N(\mathcal{D}))_{\mathrm{cusp}}$ consisting of those inertial class $\mathfrak{s}=[L,\sigma]_{\mathrm{GL}_N(\mathcal{D})}$ in the Bernstein spectrum $\mathcal{B}(\mathrm{GL}_N(\mathcal{D}))$ which support the supercuspidal representations of $\mathrm{GL}_N(\mathcal{D})$ that is the Levi subgroup $L$ in the inertial class $\mathfrak{s}$ is $\mathrm{GL}_N(\mathcal{D})$.

\begin{proposition}\label{prop: cusp}
Let $(K, \lambda)$ be an $\mathfrak{s}$-type in $\mathrm{GL}_N(\mathcal{D})$ for $\mathfrak{s} \in \mathcal{B}(\mathrm{GL}_N(\mathcal{D}))_{\mathrm{cusp}}$. Then the Hecke algebra $\mathcal H(\mathrm{GL}_N(\mathcal{D}),\lambda)$ associated with the pair $(K, \lambda)$ is isomorphic to a finitely generated commutative $\mathbb{C}$-algebra. In particular,
    \begin{equation}
        \mathcal H(\mathrm{GL}_N(\mathcal{D}),\lambda) \cong  \mathbb{C}[x, x^{-1}].
    \end{equation}
\end{proposition}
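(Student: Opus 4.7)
The plan is to reduce the statement directly to the S\'echerre--Stevens decomposition (Theorem~\ref{SS}) in the special case where the cuspidal pair already lives on all of $G$. Since $\mathfrak{s} \in \mathcal{B}(\mathrm{GL}_N(\mathcal{D}))_{\mathrm{cusp}}$, the Levi subgroup in the cuspidal pair is $L = \mathrm{GL}_N(\mathcal{D})$ itself, and so the subspace decomposition $\mathscr{V} = \bigoplus_{i=1}^{k} \mathscr{V}_i$ stabilized by $L$ (as in Section~\ref{Construct:types}) is forced to be trivial: $k=1$ with $\mathscr{V}_1 = \mathscr{V}$, $n_1 = N$, and $\sigma_1 = \sigma$.

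Consequently the index set $\mathcal{I}_{\mathfrak{s}} = \{1\}$ carries a single equivalence class under $\sim$, so in the notation of Theorem~\ref{SS} we have $\ell = 1$ and $r_1 = |\mathcal{I}_1| = 1$. Applying the theorem produces an $\mathfrak{s}$-type $(K,\lambda)$ together with an isomorphism
\[
\mathcal{H}(\mathrm{GL}_N(\mathcal{D}), \lambda) \;\cong\; \mathcal{H}(1, q_F^{\mathfrak{f}_1}),
\]
where $\mathfrak{f}_1 = n(\sigma) \cdot s(\sigma)$. Combining this with Eq.~(\ref{n:equal:1}), which identifies the rank-one affine Hecke algebra with the Laurent polynomial ring in one variable, we obtain $\mathcal{H}(\mathrm{GL}_N(\mathcal{D}), \lambda) \cong \mathbb{C}[x, x^{-1}]$, which is manifestly a finitely generated commutative $\mathbb{C}$-algebra.

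There is no genuine obstacle: the proof is essentially an unpacking of the S\'echerre--Stevens classification in the degenerate case $r=1$. The only mild subtlety is that the statement begins with \emph{any} $\mathfrak{s}$-type, while Theorem~\ref{SS} produces one particular such type. This is harmless for the intended application (Theorem~\ref{theorem: cusp}) because one is always free to pick a specific type within the cuspidal block, and in any event distinct $\mathfrak{s}$-types yield Morita-equivalent Hecke algebras by Lemma~\ref{Morita:equivalence}.
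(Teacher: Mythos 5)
Your proof is correct and follows essentially the same route as the paper: both specialize the S\'echerre--Stevens decomposition (Theorem~\ref{SS}) to the cuspidal case, where $k=\ell=r_1=1$, and then apply the isomorphism (\ref{n:equal:1}) to conclude $\mathcal{H}(\mathrm{GL}_N(\mathcal{D}),\lambda)\cong\mathbb{C}[x,x^{-1}]$. Your remark that the proposition quantifies over an arbitrary $\mathfrak{s}$-type while Theorem~\ref{SS} only furnishes one particular type is a fair observation; the paper's own proof makes the same silent specialization, and as you note it is harmless for the use in Theorem~\ref{theorem: cusp}.
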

\begin{proof}
 Since $(K,\lambda)$ is an $\mathfrak s$-type in $\mathrm{GL}_N(\mathcal{D})$, applying Theorem \ref{SS}, we get 
\[\mathcal H(\mathrm{GL}_N(\mathcal{D}),\lambda)\cong \bigotimes\limits_{i=1}^\ell \, \mathcal H(r_i,q_F^{\mathfrak{f}_i}), \text{ for } r_i = \# \mathcal{I}_i \text{ and } \mathfrak{f}_i = n(\sigma_j) \cdot s(\sigma_j).\]

In this situation, $\mathfrak s=[\mathrm{GL}_N(\mathcal{D}),\sigma]_{\mathrm{GL}_N(\mathcal{D})}$ for a supercuspidal representation $\sigma$ of $\mathrm{GL}_N(\mathcal{D})$. Using the above description in \S \ref{Construct:types}, we have $L=\mathrm{GL}_N(\mathcal{D}),~k=\# \mathcal{I}_{\mathfrak{s}}=1,~n_1=N,~\sigma_1=\sigma, ~\ell=1, ~ \mathcal{I}_1=\{1\},$ and $r_1=\# \mathcal{I}_1=1$. Then,  
\[\mathcal H(\mathrm{GL}_N(\mathcal{D}),\lambda)\cong \mathcal H(1,q_F^{\mathfrak{f}}),\] where $\mathfrak{f}=n(\sigma) s(\sigma)$ is a real number. Therefore, using isomorphism (\ref{n:equal:1}) we have
\[\mathcal H(\mathrm{GL}_N(\mathcal{D}),\lambda) \cong \mathbb{C}[x,x^{-1}].\]   
\end{proof}

\begin{theorem}\label{theorem: cusp}
Let $\mathcal{D}$ be a central division algebra defined over a locally compact non-archimedean local field. Then, the subcategory of all smooth complex representations of $\mathcal{R}(\mathrm{GL}_N(\mathcal{D}))_{\mathrm{cusp}}$ does not depend on the division algebra $\mathcal{D}$.
\end{theorem}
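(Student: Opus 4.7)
The plan is to reduce the claim to two ingredients: (i) a description of each cuspidal Bernstein block up to equivalence that is manifestly independent of $\mathcal{D}$, and (ii) a counting argument showing that the index set $\mathcal{B}(\mathrm{GL}_N(\mathcal{D}))_{\mathrm{cusp}}$ always has the same cardinality.

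For (i), I would start from the Bernstein decomposition
\[
\mathcal{R}(\mathrm{GL}_N(\mathcal{D}))_{\mathrm{cusp}} = \prod_{\mathfrak{s} \in \mathcal{B}(\mathrm{GL}_N(\mathcal{D}))_{\mathrm{cusp}}} \mathcal{R}^{\mathfrak{s}}(\mathrm{GL}_N(\mathcal{D})),
\]
pick, for each cuspidal inertial class $\mathfrak{s}$, an $\mathfrak{s}$-type $(K,\lambda)$ as produced by S\'echerre--Stevens, and combine Lemma \ref{Morita:equivalence} with Proposition \ref{prop: cusp}. Concretely, Lemma \ref{Morita:equivalence} gives the equivalence $\mathcal{R}^{\mathfrak{s}}(\mathrm{GL}_N(\mathcal{D})) \simeq \mathcal{H}(\mathrm{GL}_N(\mathcal{D}),\lambda)\text{-Mods}$, and Proposition \ref{prop: cusp} identifies the spherical Hecke algebra with $\mathbb{C}[x,x^{-1}]$. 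Thus every cuspidal Bernstein block, for any $N$ and any $\mathcal{D}$, is canonically equivalent to $\mathbb{C}[x,x^{-1}]\text{-Mods}$; this data is visibly independent of $\mathcal{D}$.

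For (ii), I would argue that the set $\mathcal{B}(\mathrm{GL}_N(\mathcal{D}))_{\mathrm{cusp}}$ is always countably infinite. Supercuspidal representations of $\mathrm{GL}_N(\mathcal{D})$ can be obtained, for instance, via the Jacquet--Langlands correspondence from essentially square-integrable representations of $\mathrm{GL}_{Nd}(F)$, and twisting by unramified characters produces only finitely many distinct inertial classes from a given one. Since $\mathrm{GL}_N(\mathcal{D})$ has countably many irreducible smooth representations (it is a second countable group, and compactly induced representations from compact-mod-center subgroups give a countable exhausting family), the number of cuspidal inertial classes is at most countable, and since there are infinitely many tamely ramified characters of $\mathcal{D}^\times$ already in the case $N=1$, the number is indeed countably infinite. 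The same argument works for any $\mathcal{D}$, so in all cases we have a bijection $\mathcal{B}(\mathrm{GL}_N(\mathcal{D}))_{\mathrm{cusp}} \leftrightarrow \mathbb{N}$.

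Combining (i) and (ii), I would conclude
\[
\mathcal{R}(\mathrm{GL}_N(\mathcal{D}))_{\mathrm{cusp}} \simeq \prod_{n \in \mathbb{N}} \mathbb{C}[x,x^{-1}]\text{-Mods},
\]
and the right-hand side depends neither on $F$ nor on $\mathcal{D}$. The step I expect to be most delicate is justifying (ii) rigorously; in particular, the fact that $\mathcal{B}(\mathrm{GL}_N(\mathcal{D}))_{\mathrm{cusp}}$ is genuinely infinite (rather than finite) has to be ensured for every $\mathcal{D}$, and one should make sure that the product-category equivalence in the conclusion respects the Bernstein product structure (i.e.\ that an arbitrary bijection of index sets induces a categorical equivalence of the products), which is automatic here because each factor is the same category $\mathbb{C}[x,x^{-1}]\text{-Mods}$.
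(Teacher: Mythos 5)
Your proposal is correct and takes essentially the same route as the paper: each cuspidal block is identified with $\mathbb{C}[x,x^{-1}]$-Mods via Lemma \ref{Morita:equivalence} and Proposition \ref{prop: cusp}, and the countable infinitude of $\mathcal{B}(\mathrm{GL}_N(\mathcal{D}))_{\mathrm{cusp}}$ then makes the product of blocks independent of $\mathcal{D}$; the paper merely phrases this as a Morita equivalence $\bigoplus_{\mathfrak{s}}\mathcal{H}(\mathrm{GL}_N(\mathcal{D}))^{\mathfrak{s}}\sim_M\bigoplus_{\nu}\mathbb{C}[x,x^{-1}]$ at the Hecke-algebra level before passing to module categories. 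Note that the paper, like you, simply asserts the countable infinitude of the cuspidal spectrum, so your flagged point (ii) is no more delicate in your version than in theirs.
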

\begin{proof}
  Let  $\mathfrak s \in \mathcal{B}(\mathrm{GL}_N(\mathcal{D}))_{\mathrm{cusp}}$. Applying Lemma \ref{Morita:equivalence}, we have
\begin{align*}
    \mathcal{H}\left(\mathrm{GL}_N(\mathcal{D})\right)^\mathfrak s &\sim_M \mathcal{H}(\mathrm{GL}_N(\mathcal{D}), \lambda) \quad \text{for some } \mathfrak s \text{-type } (K, \lambda) \text{ as in Theorem } \ref{SS}\\
    &\cong 	\mathbb{C}[x, x^{-1}] \quad \quad (\text{by Proposition } \ref{prop: cusp})
\end{align*}
There are countably infinitely many inertial classes $\mathfrak s$ in each $\mathcal{B}(\mathrm{GL}_N(\mathcal{D}))_{\mathrm{cusp}}$. Therefore, we have the following Morita equivalence: 
\begin{align}
   \mathcal{H}\left(\mathrm{GL}_N(\mathcal{D})\right)_{\rm cusp} &:= \bigoplus\limits_{\mathfrak s \in \mathcal{B}(\mathrm{GL}_N(\mathcal{D}))_{\mathrm{cusp}}} \mathcal{H}\left(\mathrm{GL}_N(\mathcal{D})\right)^\mathfrak s  \nonumber\\
   &\sim_M \bigoplus_\nu  \mathbb{C}[x, x^{-1}] \label{E1}
\end{align}
 where $\nu$ runs over a countably infinite set.

We have the following equivalence of categories
\begin{align}
  \mathcal{R}(\mathrm{GL}_N(\mathcal{D}))_{\mathrm{cusp}} &\simeq \mathcal{H}\left(\mathrm{GL}_N(\mathcal{D})\right)_{\rm cusp} \text{-Mods} \label{E2} \\
  &\simeq \left(\bigoplus_{\nu \in \mathbb{Z}}  \mathbb{C}[x, x^{-1}]\right)\text{-Mods}, \label{E3}
\end{align}
where $A\text{-Mods}$ denotes the category of all unitary left $A$-modules. The equivalences (\ref{E2}) and (\ref{E3}) follow from eq.(\ref{Eq:s-level}) and eq.(\ref{E1}) respectively. Hence the subcategory $\mathcal{R}(\mathrm{GL}_N(\mathcal{D}))_{\mathrm{cusp}}$ does not depend on the division algebra $\mathcal{D}$, and so on the locally compact non-archimedean local field $F$ that is, $\mathcal{R}(\mathrm{GL}_N(F))_{\mathrm{cusp}}$ is independent of $F$.
\end{proof}

\subsection{Result on $\mathrm{GL}_1(\mathcal{D})$}
All irreducible smooth representations of $\mathrm{GL}_1(\mathcal{D})$, the multiplicative group $\mathcal{D}^\times$ of $\mathcal{D}$ are supercuspidal and each element of the Bernstein spectrum $\mathcal{B}(\mathcal{D}^\times)$ of $\mathcal{D}^\times$ supports only supercuspidal representations. Therefore, we have the following result:
\begin{corollary}\label{main:theorem:n=1}
Let $\mathcal{D}$ be a central division algebra defined over a locally compact non-archimedean local field. Then, the category $\mathcal{R}(\mathrm{GL}_1(\mathcal{D}))$ does not depend on $\mathcal{D}$, that is
\begin{equation*}
 \mathcal{R}(\mathrm{GL}_1(\mathcal{D})) \simeq \mathcal{R}(\mathrm{GL}_1(\mathcal{D}^\prime)),   
\end{equation*}
for any two central division algebras $\mathcal{D}$ and $\mathcal{D}^\prime$ defined over any locally compact non-archimedean local field.
\end{corollary}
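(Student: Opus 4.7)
The plan is to observe that Corollary \ref{main:theorem:n=1} is essentially an immediate consequence of Theorem \ref{theorem: cusp}, once we check that for $N=1$ the cuspidal part of the category is everything.

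First, I would note that $\mathrm{GL}_1(\mathcal{D}) = \mathcal{D}^\times$ is anisotropic: it has no proper Levi subgroup, since the only Levi subgroup of $\mathcal{D}^\times$ is $\mathcal{D}^\times$ itself. Consequently, any irreducible smooth representation $(\pi, V)$ of $\mathcal{D}^\times$ is automatically supercuspidal, as there is no proper parabolic from which it could arise as a subquotient of a parabolically induced representation. In particular, every inertial equivalence class $\mathfrak{s} = [L, \sigma]_{\mathcal{D}^\times} \in \mathcal{B}(\mathcal{D}^\times)$ must have $L = \mathcal{D}^\times$, so
\begin{equation*}
    \mathcal{B}(\mathrm{GL}_1(\mathcal{D})) = \mathcal{B}(\mathrm{GL}_1(\mathcal{D}))_{\mathrm{cusp}}.
\end{equation*}

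Second, combining this with the Bernstein decomposition (Theorem \ref{Bernstein}) yields the identification of full subcategories
\begin{equation*}
    \mathcal{R}(\mathrm{GL}_1(\mathcal{D})) = \mathcal{R}(\mathrm{GL}_1(\mathcal{D}))_{\mathrm{cusp}}.
\end{equation*}
Then Theorem \ref{theorem: cusp} applied to $N=1$ tells us that $\mathcal{R}(\mathrm{GL}_1(\mathcal{D}))_{\mathrm{cusp}}$ is equivalent, via the chain of Morita equivalences in that proof, to the module category of $\bigoplus_{\nu} \mathbb{C}[x, x^{-1}]$ indexed by a countably infinite set, which is manifestly independent of $\mathcal{D}$. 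Hence for any two central division algebras $\mathcal{D}, \mathcal{D}'$ over locally compact non-archimedean local fields,
\begin{equation*}
    \mathcal{R}(\mathrm{GL}_1(\mathcal{D})) \simeq \mathcal{R}(\mathrm{GL}_1(\mathcal{D}')).
\end{equation*}

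There is essentially no obstacle here: the one point requiring a tiny verification is that the Bernstein spectrum $\mathcal{B}(\mathcal{D}^\times)$ is in fact countably infinite so that the cardinality of the indexing set matches, but this is standard (the supercuspidal support is parametrized, modulo unramified twist, by characters of the compact group $\mathcal{O}_{\mathcal{D}}^\times / (1 + \mathcal{P}_\mathcal{D})$ and finer data, all of which give countably many classes). Once this is noted, the corollary follows formally from Theorem \ref{theorem: cusp}.
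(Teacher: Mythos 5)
Your proof is correct and follows essentially the same route as the paper: observe that $\mathcal{D}^\times$ has no proper Levi subgroups, so every irreducible smooth representation is supercuspidal and $\mathcal{R}(\mathrm{GL}_1(\mathcal{D})) = \mathcal{R}(\mathrm{GL}_1(\mathcal{D}))_{\mathrm{cusp}}$, then invoke Theorem \ref{theorem: cusp}. Your extra remark about countability of $\mathcal{B}(\mathcal{D}^\times)$ is a detail the paper already absorbs into the proof of Theorem \ref{theorem: cusp}, so nothing more is needed.
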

\begin{proof}
   The proof follows from Theorem \ref{theorem: cusp}.
\end{proof}


\section{Results on $\mathrm{GL}_2(\mathcal{D})$}
In this section, we mainly concentrate on the representation theory of  $\mathrm{GL}_2(\mathcal{D})$ and the Hecke algebra $\mathcal{H}(\mathrm{GL}_2(\mathcal{D}))$. For the rest of the section, we fix $G=\mathrm{GL}_2(\mathcal{D})$.

If $L$ is a Levi subgroup of $\mathrm{GL}_2(\mathcal{D})$, up to conjugate $L$ is either $\mathrm{GL}_2(\mathcal{D})$ or $\mathcal{D}^\times \times \mathcal{D}^\times$. We now decompose the Bernstein spectrum $\mathcal{B}(G)$ of $\mathrm{GL}_2(\mathcal{D})$ into three exhaustive classes:
\begin{equation}\label{break}
    \mathcal{B}(G) = \mathcal{B}_{\mathrm{cusp}} ~ \sqcup ~ \mathcal{B}_{\mathrm{neqv}} ~ \sqcup ~ \mathcal{B}_{\mathrm{eqv}},
\end{equation}
where $\mathcal{B}_{\mathrm{cusp}}$ consists of those inertial class supporting the supercuspidal representations, $\mathcal{B}_{\mathrm{neqv}}$ consists of those inertial class $\mathfrak{s}=[L,\sigma]_G \in \mathfrak B(G)$ with $L=\mathcal{D}^\times \times \mathcal{D}^\times$, $\sigma= \sigma_1 \otimes \sigma_2$ such that $\sigma_2$ is not inertially equivalent to $\sigma_1$ in $\mathcal{D}^\times$, and $\mathcal{B}_{\mathrm{eqv}}$ consists of rest of the inertial class $\mathfrak{s}=[L,\sigma]_G $ in $ \mathfrak B(G)$ i.e., those $\mathfrak{s}=[\mathcal{D}^\times \times \mathcal{D}^\times, \sigma_1 \otimes \sigma_2]_G $ such that $\sigma_2$ is inertially equivalent to $\sigma_1$ in $\mathcal{D}^\times$.

\begin{proposition}\label{decompose:complex-algebra}
    If $(K,\lambda)$ is an $\mathfrak{s}$-type in $G$ occurring in S\'echerre-Stevens' Theorem \ref{SS}, then the $\lambda$-spherical Hecke algebra $\mathcal H(G,\lambda)$ is isomorphic to a finitely generated $\mathbb{C}$-algebra. In particular,
    \[\mathcal H(G,\lambda) \cong \begin{cases}
                \mathbb{C}[x, x^{-1}]  & \mbox{if } \mathfrak s \in \mathcal{B}_{\mathrm{cusp}}, \\
			\mathbb{C}[y, z, y^{-1}, z^{-1} ]  & \mbox{if } \mathfrak s \in \mathcal{B}_{\mathrm{neqv}}, \\
                \widetilde{\mathbb{C}}[s, t, t^{-1}]/ \left\langle   s^2-1, t^2 s-s t^2 \right\rangle  & \mbox{if } \mathfrak s \in \mathcal{B}_{\mathrm{eqv}}.  
    \end{cases}\]
\end{proposition}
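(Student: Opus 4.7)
The plan is to apply Théorème \ref{SS} of Sécherre–Stevens to obtain the tensor product decomposition $\mathcal{H}(G,\lambda) \cong \bigotimes_{i=1}^{\ell} \mathcal{H}(r_i, q_F^{\mathfrak{f}_i})$, and then to analyze what the combinatorial data $(\ell, r_i, \mathfrak{f}_i)$ look like in each of the three cases of the partition (\ref{break}). The work then reduces to identifying the affine Hecke algebras of type $\widetilde{A}_{r_i-1}$ that appear.

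First, I would dispose of $\mathfrak{s} \in \mathcal{B}_{\mathrm{cusp}}$ by simply quoting Proposition \ref{prop: cusp}, which gives $\mathcal{H}(G,\lambda) \cong \mathbb{C}[x,x^{-1}]$. For the remaining two cases, the relevant Levi is $L = \mathcal{D}^\times \times \mathcal{D}^\times$, so in the notation of \S\ref{Construct:types} we have $k = 2$, $\mathcal{I}_{\mathfrak{s}} = \{1,2\}$, and $\sigma = \sigma_1 \otimes \sigma_2$ with each $\sigma_j$ an irreducible smooth representation of $\mathcal{D}^\times$. The key step is understanding the equivalence relation $\sim$ on $\{1,2\}$: the two indices are equivalent exactly when $\sigma_1$ and $\sigma_2$ are inertially equivalent in $\mathcal{D}^\times$, which is precisely the distinction between $\mathcal{B}_{\mathrm{neqv}}$ and $\mathcal{B}_{\mathrm{eqv}}$.

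For $\mathfrak{s} \in \mathcal{B}_{\mathrm{neqv}}$, the indices $1$ and $2$ are inequivalent, so $\ell = 2$, $\mathcal{I}_1 = \{1\}$, $\mathcal{I}_2 = \{2\}$, and $r_1 = r_2 = 1$. Theorem \ref{SS} then gives
\[
\mathcal{H}(G,\lambda) \cong \mathcal{H}(1, q_F^{\mathfrak{f}_1}) \otimes_{\mathbb{C}} \mathcal{H}(1, q_F^{\mathfrak{f}_2}),
\]
and two applications of isomorphism (\ref{n:equal:1}) yield $\mathbb{C}[y, y^{-1}] \otimes_{\mathbb{C}} \mathbb{C}[z, z^{-1}] \cong \mathbb{C}[y, z, y^{-1}, z^{-1}]$. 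For $\mathfrak{s} \in \mathcal{B}_{\mathrm{eqv}}$, the two indices collapse into a single class, so $\ell = 1$ and $r_1 = 2$, whence
\[
\mathcal{H}(G,\lambda) \cong \mathcal{H}(2, q_F^{\mathfrak{f}_1}).
\]
I would then invoke Lemma \ref{main:lemma}, noting that $\mathfrak{z} := q_F^{\mathfrak{f}_1}$ is a positive real number (indeed, by Remark \ref{residue:remark}, $\mathfrak{f}_1$ is the residue degree of a finite unramified extension of $F$, hence a positive integer), so in particular $\mathfrak{z} + 1 \neq 0$. The lemma then identifies $\mathcal{H}(2, q_F^{\mathfrak{f}_1})$ with $\widetilde{\mathbb{C}}[s, t, t^{-1}]/\langle s^2 - 1, t^2 s - s t^2 \rangle$.

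The only delicate step is the bookkeeping in the second and third cases: one must check that the pair $(L, \sigma_1 \otimes \sigma_2)$ can indeed be chosen inside its inertial class so that $\sigma_1 = \sigma_2$ whenever $1 \sim 2$, which is exactly the normalization permitted in \S\ref{Construct:types}. Once this is in place, the combinatorics is forced and the three-case conclusion follows immediately from Theorem \ref{SS}, the trivial case $r=1$ of (\ref{n:equal:1}), and Lemma \ref{main:lemma}; no further computation is required.
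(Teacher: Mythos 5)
Your proposal is correct and follows essentially the same route as the paper's own proof: reduce via Theorem \ref{SS} to the tensor decomposition, quote Proposition \ref{prop: cusp} for the cuspidal case, track the combinatorics $(\ell, r_i)$ for $L=\mathcal{D}^\times\times\mathcal{D}^\times$ according to whether $1\sim 2$, and finish with the isomorphism (\ref{n:equal:1}) in the inequivalent case and Lemma \ref{main:lemma} (noting $q_F^{\mathfrak{f}_1}\neq -1$) in the equivalent case. Your extra remarks on the normalization $\sigma_1=\sigma_2$ and on the positivity of $\mathfrak{f}_1$ via Remark \ref{residue:remark} are consistent with, and slightly more explicit than, the paper's argument.
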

\begin{proof}
The pair $(K,\lambda)$ is the $\mathfrak s$-type in Theorem \ref{SS}, and therefore 
\[\mathcal H(G,\lambda)\cong \bigotimes\limits_{i=1}^l\,\mathcal H(r_i,q_F^{\mathfrak{f}_i}), \text{ for } r_i = \# \mathcal{I}_i \text{ and } \mathfrak{f}_i = n(\sigma_j) \cdot s(\sigma_j).\]
According to the nature of inertial class $\mathfrak{s}$ (see Eq. \ref{break}), it is convenient to break up the proof into the following three exhaustive cases:

{\bf Case I}: Suppose the inertial class $\mathfrak s$ lies in $\mathcal{B}_{\mathrm{cusp}}$. Then Proposition \ref{prop: cusp} gives the following isomorphism
\[\mathcal H(G,\lambda) \cong \mathbb{C}[x,x^{-1}].\]

{\bf Case II}: Suppose, the inertial class $\mathfrak s$ lies in $\mathcal{B}_{\mathrm{neqv}}$. In this case, the inertial class $\mathfrak s=[L, \sigma]_G$ consists of the diagonal subgroup $L\cong \mathcal{D}^\times \times \mathcal{D}^\times$ of $G$, and a supercuspidal representation $\sigma \simeq \sigma_1 \otimes \sigma_2$ of the diagonal subgroup $L$, where $\sigma_i$'s are irreducible smooth (supercuspidal) representations of $\mathcal{D}^\times$ such that $\sigma_2 $ is not inertially equivalent to $\sigma_1$. Then, we have the following: $k=2$, $n_1=1=n_2$, $\mathcal{I}_{\mathfrak{s}}=\{1,2\}$, and $1\not\sim 2$ in $\mathcal{I}_{\mathfrak{s}}$. Therefore, $\ell=2$, and $r_1=\# \mathcal{I}_1=1=\# \mathcal{I}_2=r_2.$ Hence we get
\[\mathcal H(G,\lambda)=\mathcal H(1,q_F^{\mathfrak{f}_1})\otimes \mathcal H(1,q_F^{\mathfrak{f}_2}),\]
where $\mathfrak{f}_1=n(\sigma_1) s(\sigma_1)$ and $ \mathfrak{f}_2= n(\sigma_2) s(\sigma_2)$ are two positive real numbers. Therefore, using isomorphism (\ref{n:equal:1}) we have
\[\mathcal H(G,\lambda) \cong \mathcal H(1,q_F^{\mathfrak{f}_1})\otimes_{\mathbb{C}} \mathcal H(1,q_F^{\mathfrak{f}_2}) \cong \mathbb{C}[y,y^{-1}] \otimes_\mathbb{C} \mathbb{C}[z,z^{-1}] \cong \mathbb{C}[y,z,y^{-1},z^{-1}].\]

{\bf Case III}: Suppose, the inertial class $\mathfrak s$ lies in $\mathcal{B}_{\mathrm{eqv}}$. In this situation, we have $L = \mathcal{D}^\times \times \mathcal{D}^\times$ and the cuspidal representation $\sigma = \sigma_1 \otimes \sigma_2$ with $\sigma_2$ inertially equivalent to $\sigma_1$. Then $k=2$, $n_1=1=n_2$, $\mathcal{I}_{\mathfrak{s}}=\{1,2\}$, and $1 \sim 2$ in $\mathcal{I}_{\mathfrak{s}}$. Therefore, $\ell=1$, and $r_1=\# \mathcal{I}_1=2$. Hence using Theorem \ref{SS}, we get
\[\mathcal H(G,\lambda) \cong \mathcal H(2,q_F^{\mathfrak{f}_1}),\]
where $\mathfrak{f}_1=n(\sigma_1) s(\sigma_1)$ is a positive real number, which implies $q_F^{\mathfrak{f}_1} \neq -1$. Therefore, Lemma \ref{main:lemma} says $\mathcal H(2,q_F^{\mathfrak{f}_1})  \cong \widetilde{\mathbb{C}}[s, t, t^{-1}]/\left\langle   s^2-1, t^2 s-s t^2 \right\rangle$, and so we have the algebra isomorphism below:
\[\mathcal H(G,\lambda) \cong \widetilde{\mathbb{C}}[s, t, t^{-1}]/ \left\langle s^2-1, t^2 s-s t^2 \right\rangle.\]
    
\end{proof}

\begin{theorem}\label{main:theorem}
  Let $F$ be a locally compact non-archimedean local field and $\mathcal{D}$ be a central division algebra over $F$. Then, the Hecke algebra $\mathcal{H}\left(\mathrm{GL}_2(\mathcal{D})\right)$ is Morita equivalent to the $\mathbb{C}$-algebra
  \[\bigoplus_n \left(\mathbb{C}[x, x^{-1}] \oplus \mathbb{C}[y, z, y^{-1}, z^{-1} ] \oplus \frac{\widetilde{\mathbb{C}}[s, t, t^{-1}]}{\left\langle   s^2-1, t^2 s-s t^2 \right\rangle} \right), \]where $n$ runs over a countable infinite set.
\end{theorem}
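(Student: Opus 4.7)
The plan is to assemble the theorem directly from the ingredients already built up in the paper. First I would invoke the Bernstein decomposition (\ref{Hecke:decomposition}) to write
\[
\mathcal{H}(G) \;=\; \bigoplus_{\mathfrak{s} \in \mathcal{B}(G)} \mathcal{H}(G)^{\mathfrak{s}},
\]
and then apply the trichotomy (\ref{break}) of $\mathcal{B}(G)$ into the three disjoint pieces $\mathcal{B}_{\mathrm{cusp}}$, $\mathcal{B}_{\mathrm{neqv}}$, $\mathcal{B}_{\mathrm{eqv}}$. This reduces the theorem to identifying each summand $\mathcal{H}(G)^{\mathfrak{s}}$ up to Morita equivalence, in each of the three cases.

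Next, for an arbitrary $\mathfrak{s} \in \mathcal{B}(G)$, I would apply Lemma \ref{Morita:equivalence} to obtain an $\mathfrak{s}$-type $(K,\lambda)$ (the one furnished by Theorem \ref{SS}) together with a Morita equivalence
\[
\mathcal{H}(G)^{\mathfrak{s}} \sim_M \mathcal{H}(G,\lambda).
\]
Then Proposition \ref{decompose:complex-algebra} tells us exactly which finitely generated $\mathbb{C}$-algebra $\mathcal{H}(G,\lambda)$ is isomorphic to, depending on whether $\mathfrak{s}$ lies in $\mathcal{B}_{\mathrm{cusp}}$, $\mathcal{B}_{\mathrm{neqv}}$, or $\mathcal{B}_{\mathrm{eqv}}$. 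Combining these, every block $\mathcal{H}(G)^{\mathfrak{s}}$ is Morita equivalent to one of the three algebras appearing in the statement, and since Morita equivalences are preserved under direct sums, summing over all $\mathfrak{s}$ and regrouping according to the three cases gives
\[
\mathcal{H}(G) \;\sim_M\; \Bigl(\bigoplus_{\mathcal{B}_{\mathrm{cusp}}} \mathbb{C}[x,x^{-1}]\Bigr) \,\oplus\, \Bigl(\bigoplus_{\mathcal{B}_{\mathrm{neqv}}} \mathbb{C}[y,z,y^{-1},z^{-1}]\Bigr) \,\oplus\, \Bigl(\bigoplus_{\mathcal{B}_{\mathrm{eqv}}} \tfrac{\widetilde{\mathbb{C}}[s,t,t^{-1}]}{\langle s^2-1,\, t^2s-st^2\rangle}\Bigr).
\]

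The remaining step is to verify that each of $\mathcal{B}_{\mathrm{cusp}}$, $\mathcal{B}_{\mathrm{neqv}}$, $\mathcal{B}_{\mathrm{eqv}}$ is countably infinite, so that after reindexing we may write a single countable direct sum $\bigoplus_n$ of the triple. Countability is automatic because inertial classes are parametrized by supercuspidal data on Levi subgroups modulo unramified twists, and $\mathcal{D}^\times$ as well as $\mathrm{GL}_2(\mathcal{D})$ admit only countably many such equivalence classes. Infinitude of each piece is easy: $\mathcal{B}_{\mathrm{cusp}}$ already contains infinitely many unramified twist classes of one-dimensional characters $\chi\circ\mathrm{Nrd}$ for $\chi$ a character of $F^\times$; similarly, varying the two cuspidal factors $\sigma_1,\sigma_2$ of $\mathcal{D}^\times$ independently produces infinitely many classes in $\mathcal{B}_{\mathrm{neqv}}$ (choose $\sigma_1,\sigma_2$ with distinct inertial classes) and in $\mathcal{B}_{\mathrm{eqv}}$ (fix any one $\sigma$ and let $\sigma_1 = \sigma_2 = \sigma$, then vary $\sigma$).

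The main obstacle, if any, is this counting/indexing step, because everything else is a bookkeeping combination of Lemma \ref{Morita:equivalence}, Proposition \ref{decompose:complex-algebra}, and the decomposition (\ref{break}). Once the countable infinitude of each of the three subsets of $\mathcal{B}(G)$ is confirmed, we can absorb the three index sets into one countable set $n$ by a bijection, and the resulting Morita equivalence takes exactly the form stated in the theorem.
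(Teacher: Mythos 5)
Your overall route is exactly the paper's: Bernstein decomposition (\ref{Hecke:decomposition}), the trichotomy (\ref{break}), Lemma \ref{Morita:equivalence} to pass from $\mathcal{H}(G)^{\mathfrak{s}}$ to $\mathcal{H}(G,\lambda)$, Proposition \ref{decompose:complex-algebra} to identify $\mathcal{H}(G,\lambda)$ in each of the three cases, and then a counting/reindexing step to write the result as a single countable direct sum. So structurally there is nothing to add.

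However, your justification of the infinitude of $\mathcal{B}_{\mathrm{cusp}}$ is wrong as stated. The one-dimensional representations $\chi\circ\mathrm{Nrd}$ of $\mathrm{GL}_2(\mathcal{D})$ are \emph{not} supercuspidal: by the paper's own classification in Section \ref{tadic} they are of type (III), i.e.\ subquotients of reducible principal series $\sigma\,|\cdot|^{-a(\sigma)/2d}\times\sigma\,|\cdot|^{a(\sigma)/2d}$, so their inertial support is $[\mathcal{D}^\times\times\mathcal{D}^\times,\,\sigma_1\otimes\sigma_2]_G$ with $\sigma_1,\sigma_2$ inertially equivalent; these classes lie in $\mathcal{B}_{\mathrm{eqv}}$, not in $\mathcal{B}_{\mathrm{cusp}}$ (one can also see directly that the Jacquet module of $\chi\circ\mathrm{Nrd}$ is nonzero, since unipotent elements have reduced norm $1$). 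To repair this step you should instead invoke the existence of countably infinitely many genuinely supercuspidal representations of $\mathrm{GL}_2(\mathcal{D})$ up to unramified twist (e.g.\ via compact induction of cuspidal types of varying level, or via the Jacquet--Langlands correspondence with supercuspidals of $\mathrm{GL}_{2d}(F)$); the paper itself only asserts this countable infinitude, citing Bernstein for countability of the full spectrum. Your arguments for $\mathcal{B}_{\mathrm{neqv}}$ and $\mathcal{B}_{\mathrm{eqv}}$ are fine, since characters of $\mathcal{D}^\times$ are supercuspidal and supply infinitely many inertial classes of each kind.
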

\begin{proof}
Using Lemma \ref{Morita:equivalence} and Proposition \ref{decompose:complex-algebra} successively for each inertial class $\mathfrak{s} \in \mathcal{B}(\mathrm{GL}_2(\mathcal{D}))$, we have the following Morita equivalence:
\begin{align*}
    \mathcal{H}\left(\mathrm{GL}_2(\mathcal{D})\right)^\mathfrak s &\sim_M \mathcal{H}(\mathrm{GL}_2(\mathcal{D}), \lambda) \quad \text{for some } \mathfrak s \text{-type } (K, \lambda) \text{ as in Theorem } \ref{SS}\\
    &\simeq \begin{cases}
       \vspace{2 mm}
			\mathbb{C}[x, x^{-1}]  & \mbox{if } \mathfrak s \in \mathcal{B}_{\mathrm{cusp}}, \\
                \vspace{2 mm}
			\mathbb{C}[y, z, y^{-1}, z^{-1} ]  & \mbox{if } \mathfrak s \in \mathcal{B}_{\mathrm{neqv}}, \\
               \widetilde{\mathbb{C}}[s, t, t^{-1}]/ \left\langle   s^2-1, t^2 s-s t^2 \right\rangle  & \mbox{if } \mathfrak s \in \mathcal{B}_{\mathrm{eqv}}. 
    \end{cases}
\end{align*}
The Bernstein spectrum $\mathcal{B}(G)$ is a countably infinite set (cf. \cite[Corolaire 3.9]{Ber}). Moreover, there are countably infinitely many inertial classes $\mathfrak s$ in each $\mathcal{B}_{\mathrm{cusp}}, \mathcal{B}_{\mathrm{neqv}}$, and $\mathcal{B}_{\mathrm{eqv}}$. Therefore, the decomposition (\ref{Hecke:decomposition}) 
\begin{align*}
   \mathcal{H}\left(\mathrm{GL}_2(\mathcal{D})\right) &= \bigoplus\limits_{\mathfrak s \in \mathcal{B}(G)} \mathcal{H}\left(\mathrm{GL}_2(\mathcal{D})\right)^\mathfrak s \\
   &= \bigoplus\limits_{\mathfrak s \in \mathcal{B}_{\mathrm{cusp}}} \mathcal{H}\left(\mathrm{GL}_2(\mathcal{D})\right)^\mathfrak s ~ \bigoplus\limits_{\mathfrak s \in \mathcal{B}_{\mathrm{neqv}}} \mathcal{H}\left(\mathrm{GL}_2(\mathcal{D})\right)^\mathfrak s ~ \bigoplus\limits_{\mathfrak s \in \mathcal{B}_{\mathrm{eqv}}} \mathcal{H}\left(\mathrm{GL}_2(\mathcal{D})\right)^\mathfrak s
\end{align*}
induces a Morita equivalence of the Hecke algebra $\mathcal{H}\left(\mathrm{GL}_2(\mathcal{D})\right)$ and the $\mathbb{C}$-algebra 
\begin{equation*}
\bigoplus_\nu \left( \mathbb{C}[x, x^{-1}] \oplus \mathbb{C}[y, z, y^{-1}, z^{-1} ] \oplus \widetilde{\mathbb{C}}[s, t, t^{-1}]/ \left\langle   s^2-1, t^2 s-s t^2 \right\rangle \right),    
\end{equation*}
 where $\nu$ runs over a countably infinite set.
\end{proof}

\begin{corollary}\label{main:corollary}
Let $\mathcal{D}$ be a central division algebra defined over a locally compact non-archimedean local field. Then, the category of all smooth complex representations of $\mathrm{GL}_2(\mathcal{D})$ does not depend on the division algebra $\mathcal{D}$.
\end{corollary}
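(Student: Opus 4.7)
The plan is to deduce the corollary directly from Theorem \ref{main:theorem} together with the categorical equivalence in Eq.~(\ref{Eq:categorical}). Concretely, Theorem \ref{main:theorem} exhibits an explicit $\mathbb{C}$-algebra
\[
A \;:=\; \bigoplus_{\nu} \left(\mathbb{C}[x, x^{-1}] \oplus \mathbb{C}[y, z, y^{-1}, z^{-1}] \oplus \widetilde{\mathbb{C}}[s, t, t^{-1}]/\left\langle s^2-1, t^2 s-s t^2 \right\rangle\right),
\]
(with $\nu$ running over a fixed countably infinite set) that is Morita equivalent to $\mathcal{H}(\mathrm{GL}_2(\mathcal{D}))$. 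The crucial observation is that the description of $A$ involves no data depending on $\mathcal{D}$ or $F$: the parameters $q_F^{\mathfrak{f}_i}$ appearing in the S\'echerre--Stevens presentation have been absorbed using Eq.~(\ref{n:equal:1}) for the rank-one factors and Lemma~\ref{main:lemma} for the rank-two factor, and the Bernstein spectrum $\mathcal{B}(\mathrm{GL}_2(\mathcal{D}))$ contributes only through its (fixed) cardinality within each of the three subsets $\mathcal{B}_{\mathrm{cusp}}, \mathcal{B}_{\mathrm{neqv}}, \mathcal{B}_{\mathrm{eqv}}$.

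The proof then proceeds in a single short step. By the categorical equivalence in Eq.~(\ref{Eq:categorical}),
\[
\mathcal{R}(\mathrm{GL}_2(\mathcal{D})) \;\simeq\; \mathcal{H}(\mathrm{GL}_2(\mathcal{D}))\text{-Mods}.
\]
Since Morita equivalent algebras have equivalent categories of non-degenerate left modules, Theorem~\ref{main:theorem} gives
\[
\mathcal{H}(\mathrm{GL}_2(\mathcal{D}))\text{-Mods} \;\simeq\; A\text{-Mods}.
\]
Composing these equivalences yields $\mathcal{R}(\mathrm{GL}_2(\mathcal{D})) \simeq A\text{-Mods}$, and the right-hand side manifestly does not depend on $\mathcal{D}$. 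In particular, for any two central division algebras $\mathcal{D}, \mathcal{D}^\prime$ over (possibly different) locally compact non-archimedean local fields, $\mathcal{R}(\mathrm{GL}_2(\mathcal{D})) \simeq \mathcal{R}(\mathrm{GL}_2(\mathcal{D}^\prime))$.

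The only delicate point, and the closest thing to an obstacle, is a bookkeeping concern rather than a real difficulty: one must check that the Morita equivalence produced by Theorem~\ref{main:theorem} is compatible with the \emph{non-degenerate} module convention used to match $\mathcal{R}(\mathrm{GL}_2(\mathcal{D}))$ with $\mathcal{H}(\mathrm{GL}_2(\mathcal{D}))\text{-Mods}$. This is already handled inside the proofs of Lemma~\ref{Morita:equivalence} and Theorem~\ref{theorem: cusp}: the Bernstein blocks $\mathcal{H}(\mathrm{GL}_2(\mathcal{D}))^{\mathfrak{s}}$ are each Morita equivalent to a unital spherical Hecke algebra $\mathcal{H}(\mathrm{GL}_2(\mathcal{D}), \lambda)$, and the resulting equivalences assemble over the decomposition~(\ref{Hecke:decomposition}). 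So no additional work is required, and the corollary is immediate.
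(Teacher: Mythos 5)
Your proposal is correct and follows essentially the same route as the paper: both deduce the corollary from Theorem \ref{main:theorem} combined with the categorical equivalence (\ref{Eq:categorical}), the only cosmetic difference being that you pass through the fixed $\mathcal{D}$-independent algebra $A$ while the paper compares $\mathcal{H}(\mathrm{GL}_2(\mathcal{D}_1))$ and $\mathcal{H}(\mathrm{GL}_2(\mathcal{D}_2))$ directly by transitivity of Morita equivalence. Your remark on compatibility with the non-degenerate module convention is a reasonable bookkeeping check that the paper leaves implicit.
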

\begin{proof}
  Fix two central division algebras $\mathcal{D}_1 $ and $\mathcal{D}_2$ defined over some locally compact non-archimedean local fields. Theorem \ref{main:theorem} implies that the algebra $\mathcal{H}\left(\mathrm{GL}_2(\mathcal{D}_1)\right)$ is Morita equivalent to the algebra $\mathcal{H}\left(\mathrm{GL}_2(\mathcal{D}_2)\right)$. Then, there is an equivalence of categories $\mathcal{H}\left(\mathrm{GL}_2(\mathcal{D}_1)\right)$-Mods and $\mathcal{H}\left(\mathrm{GL}_2(\mathcal{D}_2)\right)$-Mods of non-degenerate modules. Therefore, the categorical equivalence (\ref{Eq:categorical}) induces the following equivalence of categories
   \[\mathcal{R}\left(\mathrm{GL}_2(\mathcal{D}_1)\right) \simeq \mathcal{R}\left(\mathrm{GL}_2(\mathcal{D}_2) \right).\]
\end{proof}


\begin{remark}
   It isn't easy to extend the proof of Main Theorem to $\mathrm{GL}_N(\mathcal{D})$ for $N\geq 3$ because we can't produce an algebra isomorphism (precisely a Morita equivalence) as in Lemma \ref{main:lemma}. In particular, for $N=3$ the affine Hecke algebras $\mathcal{H}(3, \mathfrak z)$ of type $\tilde{A}_2$ must occur in the decomposition of spherical Hecke algebra of $\mathrm{GL}_3(\mathcal{D})$ (see Theorem \ref{SS}) and $\mathcal{H}(3, \mathfrak z)$ is not Morita equivalent to $\mathcal{H}(3, \mathfrak{z}^\prime)$ if $\mathfrak{z} \neq \mathfrak{z}^\prime$ and $\mathfrak{z} \mathfrak{z}^\prime \neq 1$ (see \cite[Theorem 3.1]{Yan}).
\end{remark}

\begin{remark}
  From the simplicity of the result and the existence of types together with the decomposition of the corresponding Hecke algebras into affine algebras, one can hope that the result holds for lower-rank classical groups and their inner forms. 
\end{remark}



\begin{thebibliography}{9999999}
\bibitem[ABP06]{ABP} Anne-Marie Aubert,~P. Baum, and ~R. Plymen, \emph{ The Hecke algebra of a reductive $p$-adic group: a geometric conjecture},  Noncommutative geometry and number theory, 1–34, Aspects Math., E{\bf 37}, Friedr. Vieweg, Wiesbaden, 2006. \MR 2327297

\bibitem[Ber84]{Ber}  J. ~N. Bernstein,  \emph{ Le ``centre" de Bernstein.} (French) [The Bernstein ``center"] Edited by P. Deligne. Travaux en Cours, Representations of reductive groups over a local field, 1–32, Hermann, Paris, 1984. \MR 0771671

\bibitem[BK93]{BK93} Colin ~J. Bushnell and Philip ~C. Kutzko,  \emph{ The admissible dual of $GL(N)$ via compact open subgroups}, Annals of Mathematics Studies, Vol. {\bf 129}, Princeton University Press, Princeton, NJ, 1993. \MR1204652

\bibitem[BK98]{BK98} \bysame, \emph{ Smooth representations of reductive $p$-adic groups: structure theory via types},  Proc. London Math. Soc. (3) {\bf 77} (1998), no. 3, 582–634. \MR1643417.

\bibitem[CK17]{CK17}  Gunther Cornelissen and Valentijn Karemaker,   \emph { Hecke algebra isomorphisms and adelic points on algebraic groups.} Doc. Math. {\bf 22} (2017), 851–871. \MR3665402

\bibitem[DKV84]{DKV} P. Deligne, ~D. Kazhdan, and ~M. ~F. Vign\'{e}ras, \emph{Repr\'{e}sentations des alg\`{e}bres centrales simples $p$-adiques}, Representations of reductive groups over a local field, {Travaux en Cours}, {33--117},  {Hermann, Paris}, {1984}. \MR {771672}

\bibitem[Kar16]{Kar16}  Valentijn Karemaker, \emph{ Hecke algebras for $\mathrm{GL}_n$ over local fields},  Arch. Math. (Basel). {\bf 107} (2016), no. 4, 341–353. \MR3552212

\bibitem[MP21]{MP} Manish Mishra and Basudev Pattanayak, \emph{ Principal series component of Gelfand-Graev representation}, Proc. Amer. Math. Soc. {\bf 149}(2021), no.11, 4955–4962. \MR 4310118

\bibitem[Rag07]{Rag} A. Raghuram,  \emph{ On the restriction to $\mathcal{D}^* \times \mathcal{D}^*$ of representations of $p$-adic $\mathrm{GL}_2(\mathcal{D})$}, Canad. ~J. Math. {\bf 59} (2007), no. 5, 1050–1068. \MR 2354402

\bibitem[S\'ec05]{Sec05} Vincent S\'echerre, \emph{ Repr\'esentations lisses de $\mathrm{GL}_m(D)$. III. types simples}(French) [Smooth representations of $\mathrm{GL}_m(D)$. III. Simple types], Ann. Sci. \'Ec. Norm. Sup\'er. (4) {\bf 38} (2005), 951–977. \MR2216835


\bibitem[S\'ec09]{Sec09} \bysame, \emph{ Proof of the Tadi\'c conjecture $(U0)$ on the unitary dual of $\mathrm{GL}_m(D)$},  ~J. Reine Angew. Math. {\bf 626} (2009), 187–203. \MR 2492994

\bibitem[Sol19]{Sol} Maarten Solleveld, \emph{ On completions of Hecke algebras}, Representations of reductive $p$-adic groups, 207–262, Progress in Mathematics, vol {\bf 328}, Birkhäuser/Springer, Singapore, 2019. \MR 3889765

\bibitem[SS08]{SS08} Vincent S\'echerre and Shaun Stevens, \emph{ Repr\'esentations lisses de $\mathrm{GL}_m(D)$. IV. repr\'esentations supercuspidales}(French) [Smooth representations of $\mathrm{GL}_m(D)$. IV. Supercuspidal representations], ~J. Inst. Math. Jussieu {\bf 7.3} (2008), 527–574. \MR 2427423

\bibitem[SS12]{SS12} \bysame, \emph{ Smooth Representations of $\mathrm{GL}_m(D)$ VI: Semisimple Types},  Int. Math. Res. Not. IMRN 2012, no. {\bf 13}, 2994–3039. \MR 2946230

\bibitem[Tad90]{Tad90} Marko Tadi\'c,  \emph{ Induced representations of $\mathrm{GL}(n,A)$ for $p$-adic division algebras A}, ~J. Reine Angew. Math. {\bf 405} (1990), 48–77. \MR1040995

\bibitem[Yan10]{Yan} Rong Yan, \emph{Isomorphisms between affine Hecke algebras of type $\tilde{A}_2$}, J. Algebra, Vol. {\bf 324}, No. 5, 2010, 984–999. \MR2659209

\end{thebibliography}
\end{document}